\newtheorem{theorem}{Theorem}
\newtheorem{corollary}[theorem]{Corollary}
\newtheorem{lemma}[theorem]{Lemma}
\newtheorem{conjecture}[theorem]{Conjecture}
\newenvironment{proof}{\par \noindent \textbf{Proof.} }{\hfill$\Box$\medskip}
\begin{document}

\title{Oriented trees in $O(k \sqrt{k})$-chromatic digraphs, a subquadratic bound for Burr's conjecture
 \thanks{This paper was partially supported by the ANR DIGRAPHS (ANR-19-CE48-0013)}}

\author[a]{Stéphane Bessy}
\author[a]{Daniel Gonçalves}
\author[a]{Amadeus Reinald}

\affil[a]{{\small LIRMM, Univ Montpellier, CNRS, Montpellier, France.}}

\date{}

\maketitle

\begin{abstract}
    In 1980, Burr conjectured that every directed graph with chromatic number $2k-2$ contains any oriented tree of order $k$ as a subdigraph.
    Burr showed that chromatic number $(k-1)^2$ suffices, which was improved in 2013 to $\frac{k^2}{2} - \frac{k}{2} + 1$ by Addario-Berry et al.

    We give the first subquadratic bound for Burr's conjecture, by showing that every directed graph with chromatic number $8\sqrt{\frac{2}{15}} k \sqrt{k} + O(k)$ contains any oriented tree of order $k$.
    Moreover, we provide improved bounds of $\sqrt{\frac{4}{3}} k \sqrt{k}+O(k)$ for arborescences, and $(b-1)(k-3)+3$ for paths on $b$ blocks, with $b\ge 2$.
\end{abstract}

\section{Introduction}

In essence, many of the questions in structural graph theory ask for necessary conditions for a (di)graph to contain some other given (di)graph.
A natural way to specify this question is to ask, when imposing that a (di)graph $G$ has chromatic number $c$, which (di)graphs are necessarily contained in $G$.
A well-known result by Erdős \cite{erdosBook} states that there exist graphs with arbitrarily large girth and chromatic number.
Therefore, all (di)graphs one can hope to be necessarily contained in (di)graphs of chromatic number $c$ are (directed) trees.
This question is easily settled when the graph is undirected, as all $k$-chromatic graphs contain all trees of size $k$, which is tight by considering the clique $K_k$.
When asking for \emph{induced} subtrees instead, and with the additional restriction of forbidding a fixed clique, this is the notorious Gyárfás–Sumner conjecture \cite{gyarfasConjecture} \cite{sumnerConjecture}, which is still wide open.

In this paper, we investigate the directed variant of the question, asking which directed trees are contained as (non-induced) subdigraphs in digraphs with sufficiently large chromatic number.
We refer the reader to \cite{havet2013orientations} for lecture notes on the topic.
A digraph is $c$\emph{-universal} for some value $c>0$ if every digraph with chromatic number at least $c$ contains it as a subdigraph.
Since the chromatic number of a digraph is constant when replacing digons with arcs, if $H$ is $c$-universal in the class of oriented graphs, it is also $c$-universal. Moreover, considering oriented graphs with arbitrarily large girth and chromatic number yields that $H$ must be an oriented tree.
We are thus interested in upper bounds on the universality of oriented trees, for which Burr posed the following conjecture in 1980.
\begin{conjecture}[Burr \cite{burrConjecture}]
    Every oriented tree of order $k$ is $(2k-2)$-universal.
\end{conjecture}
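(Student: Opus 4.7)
The plan is to proceed by strong induction on $k$, decomposing the oriented tree $T$ via its block structure and matching this decomposition to structural information about the chromatic number of $D$. The base case $k \leq 2$ is immediate. When $T$ has a single block, i.e.\ $T$ is a directed path on $k$ vertices, the Gallai-Hasse-Roy-Vitaver theorem gives the embedding as soon as $\chi(D) \geq k$, which is stronger than what is needed.

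For the inductive step, I would select a leaf block $P$ of $T$ of length $\ell \geq 1$, attached at vertex $x$ to the rest $T' = T - (V(P)\setminus\{x\})$ of order $k - \ell$; reversing all arcs of $D$ if necessary, assume $P$ is directed out of $x$. The core structural claim to establish is: every digraph $D$ with $\chi(D) \geq 2k-2$ contains a subdigraph $D^\star$ with $\chi(D^\star) \geq 2(k-\ell) - 2$, each of whose vertices is the start of a directed out-path of length $\ell$ in $D$ internally disjoint from $D^\star$. Granting this, one embeds $T'$ inductively into $D^\star$ sending $x$ to some vertex $w$, then appends the guaranteed out-path at $w$ to realize $P$. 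Iterating over blocks, a block of length $\ell$ would contribute $\ell$ new vertices at a chromatic cost of $2\ell$, summing to exactly $2(k-1)$, and matching the conjectured bound (which is known to be tight, for example via Paley-tournament-like constructions).

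The main obstacle is establishing the structural claim with the tight linear loss. A naive strategy deletes vertices whose $\ell$-step out-reachability is poor, but this seems to force a multiplicative loss per block and recovers only the quadratic bound of Addario-Berry et al.\ that the present paper subquadratically improves upon. Achieving an additive loss of $2\ell$ per block, independent of block multiplicity, appears to require a genuinely global argument: perhaps exploiting the dichromatic number, an averaging argument over the $\ell$-step reachability relation, or a Ramsey-type dichotomy separating digraphs rich in digons (where undirected tree-embedding techniques apply) from those closer to oriented graphs of high dichromatic number. Unifying these regimes without paying a multiplicative price per inductive step is precisely the crux one should expect to get stuck on; it is this step that keeps Burr's conjecture open and that the present paper attacks successfully only up to a factor of $\sqrt{k}$.
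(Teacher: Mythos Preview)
The statement you are addressing is Burr's \emph{conjecture}; the paper does not prove it and explicitly treats it as open. There is therefore no ``paper's own proof'' to compare against. Your write-up is honest about this: you outline an inductive scheme and then correctly flag that the key structural claim---that one can peel off a block of length $\ell$ at a chromatic cost of exactly $2\ell$, independently of $|T'|$---is unproven and is precisely the barrier.

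It is worth noting how your hoped-for structural claim compares to what the paper actually establishes. Your claim asks for a subdigraph $D^\star$ with $\chi(D^\star)\ge \chi(D)-2\ell$ in which every vertex starts an out-path of length $\ell$ disjoint from $D^\star$. The paper's Lemma~\ref{lem:directed-partition} and Lemma~\ref{lem:directed-gluing} achieve a weaker version: the partition $(X,Y,Z)$ has $\chi(D[Z])\le \ell$, but the set $X$ from which one embeds $T'$ only satisfies $\chi(D[X])\ge \chi(D)-(k'+2\ell-3)$, because the intermediate acyclic layer $Y$ can absorb up to $k'+\ell-1$ colours. That extra $k'=|T'|$ term per gluing is exactly why the paper's method yields $O(k\sqrt{k})$ rather than $O(k)$; eliminating it is equivalent to your structural claim and remains open. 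So your diagnosis of the obstacle is accurate, but what you have written is a discussion of the difficulty, not a proof.
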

Observe that this is the best one can ask for, as the out-star on $k$ vertices $S_k^+$ is not contained in the regular tournament of order $2k-3$.
In the same paper, Burr showed all oriented trees are $(k-1)^2$-universal.
This stood as the only bound for the problem for decades, until Addario-Berry, Havet, Linhares-Sales, Reed and Thomassé gave the following improvement in 2013.
\begin{theorem}[Addario-Berry et al. \cite{addario-berryOrientedTreesDigraphs2013}]\label{thm:addario-general}
    Every oriented tree of order $k$ is $(\frac{k^2}{2} - \frac{k}{2} + 1)$-universal.
\end{theorem}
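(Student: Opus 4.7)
I would proceed by strong induction on $k$, with trivial base cases ($k \le 2$). For the inductive step, let $T$ be an oriented tree on $k$ vertices and $D$ a digraph with $\chi(D) \ge \frac{k^2}{2} - \frac{k}{2} + 1$. Since $T$ has at least two leaves, pick one such leaf $v$ with neighbor $u$. By reversing all arcs of both $D$ and $T$ if necessary (this preserves chromatic numbers and induces a bijection on embeddings), I may assume the incident arc is $u\to v$; set $T' := T - v$.

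The strategy is to find a subdigraph $H \subseteq D$ such that (i) every vertex $w \in V(H)$ satisfies $d^+_D(w) \ge k-1$, and (ii) $\chi(H) \ge \frac{(k-1)^2}{2} - \frac{k-1}{2} + 1$. With such an $H$, the induction hypothesis embeds $T'$ into $H$ via some map $\phi$; then $\phi(u)$ has at least $k-1$ out-neighbors in $D$, and since $|V(T')\setminus\{u\}| = k-2$, one of those out-neighbors lies outside $\phi(V(T'))$ and can host the leaf $v$, completing the embedding of $T$.

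A natural candidate is $R := \{w \in V(D) : d^+_D(w) < k-1\}$ and $H := D - R$, for which (i) is automatic. Using $\chi(D) \le \chi(D[R]) + \chi(H)$, requirement (ii) reduces to the inequality $\chi(D[R]) \le k-1$, which matches exactly the arithmetic increment
\[
\tfrac{k^2}{2} - \tfrac{k}{2} + 1 - \left(\tfrac{(k-1)^2}{2} - \tfrac{k-1}{2} + 1\right) = k-1.
\]

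The main obstacle is thus to prove $\chi(D[R]) \le k-1$. Although every vertex of $D[R]$ has out-degree at most $k-2$, the in-degrees within $R$ can be arbitrarily large, so the naive bound $\chi(D[R]) \le \Delta(D[R]) + 1$ obtained from the average-degree argument only gives roughly $2k-3$, which is too weak by a factor of two. Bridging this gap is the crux, and likely relies on combining the symmetric situation (where $T$ has some leaf whose incident arc is oriented the reverse way, allowing the argument to run with in-degrees instead) together with an iterative peeling that alternates between removing vertices of small out-degree and small in-degree. Whenever $D[R]$ itself has small chromatic number, the embedding is straightforward; otherwise one extracts additional structure from $D[R]$ that embeds a portion of $T$ directly, so that the remaining tree to embed is strictly smaller and one can close the argument by a secondary induction. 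Carefully optimising this bookkeeping, using the flexibility of choosing which leaf of $T$ to peel, is what should yield the $\frac{k^2}{2}$-type bound rather than Burr's original $(k-1)^2$.
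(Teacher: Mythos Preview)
This theorem is not proved in the present paper; it is quoted from Addario-Berry et al.\ and only used as a black box. The paper does, however, supply the key ingredient (Lemma~\ref{lem:bikernel-perfect}: every acyclic digraph with $\chi\ge k$ contains every oriented tree on $k$ vertices) and states explicitly that this lemma is ``the building block for the $k^2/2+O(k)$ bound''.

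Your plan has a genuine gap, and it is precisely the one you flag: the inequality $\chi(D[R])\le k-1$ for $R=\{w:d_D^+(w)<k-1\}$ is \emph{false}. For $k=3$, a consistently oriented odd cycle has every out-degree equal to $1=k-2$, so $R=V(D)$, yet $\chi=3>k-1$; circulant digraphs show $\chi(D[R])$ can be as large as $2k-3$ for every $k$. Your proposed rescue (alternating in/out-degree peeling, or exploiting a leaf of the other orientation) is at bottom the degeneracy argument and cannot beat $2k-3$; plugging that increment into your recursion rebuilds Burr's $(k-1)^2$, not the target. The deeper issue is that you are using degrees to guarantee an out-neighbour avoiding $T'$, and that lever is too coarse.

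The argument that does work replaces your $R$ by a structurally different set. Take $A\subseteq V(D)$ inducing a \emph{maximal} acyclic subdigraph. If $\chi(D[A])\ge k$, then $D[A]$ already contains $T$ by Lemma~\ref{lem:bikernel-perfect}, the ingredient missing from your outline. Otherwise $\chi(D[A])\le k-1$, hence $\chi(D-A)\ge\chi(D)-(k-1)$, and by induction $D-A$ contains a copy of $T'=T-v$. Maximality of $A$ gives the image of $u$ an out-neighbour in $A$, which is automatically disjoint from the copy of $T'$ since $T'$ lives in $D-A$; that neighbour hosts $v$. The increment per leaf is thus exactly $k-1$, and summing yields $1+\sum_{j=2}^{k}(j-1)=\tfrac{k^2-k}{2}+1$.
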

Until now, this has remained the best bound for the general case.
Improvements have nevertheless been obtained when asking for particular oriented trees, or by further restricting the directed graphs on which one imposes the chromatic number requirement.
We provide the first subquadratic bound for Burr's conjecture, by showing the following (with $8\sqrt{2/15}\simeq 2.92$).
\begin{restatable}{thm}{main}\label{thm:main}
    Every oriented tree of order $k$ is $(8 \sqrt{\frac{2}{15}} k\sqrt{k} + \frac{11}{3}k + \sqrt{\frac{5}{6}} \sqrt{k} + 1)$-universal.
\end{restatable}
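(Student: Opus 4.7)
The plan is to reduce the embedding of a general oriented tree to two subsidiary embedding problems for which the paper already announces stronger bounds at the $O(k\sqrt{k})$ scale: embedding arborescences with bound $\sqrt{4/3}\,k\sqrt{k} + O(k)$, and embedding oriented paths with $b$ blocks with bound $(b-1)(k-3)+3$. Both subsidiary results are linear-in-parameter in the right way, so the real work is to match an arbitrary oriented tree to a combination of these two regimes, with a threshold $t$ of order $\sqrt{k}$ balancing them.

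Structural dichotomy. Let $T$ be an oriented tree of order $k$ and fix a parameter $t \asymp \sqrt{k}$. I would distinguish two cases depending on the structure of $T$. In the first case, $T$ contains a (not necessarily spanning) sub-arborescence $A$, either outgoing from a root or incoming to a root, with $|V(A)| \geq k - c_1 t$ for a suitable constant $c_1$. In this case I embed $A$ using the arborescence bound applied inside a high-chromatic sub-digraph of $D$, and then extend the partial embedding to the remaining $O(\sqrt{k})$ vertices one at a time, each extension consuming only a bounded additive amount of chromatic number by a local argument (finding an in- or out-neighbour of the right colour class in a residual sub-digraph).

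In the complementary case, $T$ has no such large sub-arborescence, which forces $T$ to exhibit many orientation changes. I would then extract from $T$ a \emph{spine}: an oriented sub-path $P$ with at most $b \leq c_2 t \asymp \sqrt{k}$ blocks, chosen so that every vertex of $T\setminus V(P)$ belongs to a small sub-arborescence hanging off $P$. The spine is embedded in $D$ by applying the path bound $(b-1)(k-3)+3$ to a suitable sub-digraph, after which the hanging arborescences are inserted iteratively using the arborescence bound applied to residual high-chromatic neighbourhoods along the spine.

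The main obstacle is the bookkeeping on residual chromatic number: each time we commit vertices to embed a stage (the big arborescence, the spine, or an attached sub-arborescence), the remainder of $D$ must retain enough chromatic structure to carry out the next stage. This calls for a lemma of the form \textbf{(R)}: in any sub-digraph of high chromatic number and for any pre-specified image vertex, one can find a copy of a given arborescence rooted at that vertex using a chromatic budget that matches its universal bound up to an additive $O(k)$ term. Calibrating the two subsidiary constants $\sqrt{4/3}$ and the path coefficient against lemma \textbf{(R)} and optimising the threshold $t$ would then yield the announced leading constant $8\sqrt{2/15}$, with the lower-order terms $\tfrac{11}{3}k + \sqrt{5/6}\sqrt{k} + 1$ absorbing the extension-phase overhead and the rounding of $t$ to an integer.
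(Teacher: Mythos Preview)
Your dichotomy does not hold, and the failure is concrete. Take $T$ to be an antidirected path on $k$ vertices. The largest sub-arborescence of $T$ has two vertices, so Case~1 cannot apply. In Case~2, any spine $P$ with at most $c_2\sqrt{k}$ blocks covers at most $c_2\sqrt{k}+1$ vertices of $T$; what hangs off $P$ is then a pair of antidirected sub-paths each of order roughly $k/2$. These are neither arborescences nor small, so your attachment phase has nothing to work with. The same obstruction appears for any tree built from many short alternating segments (e.g.\ a star of degree $\sqrt{k}$ whose rays are antidirected paths of length $\sqrt{k}$). The claim ``no large sub-arborescence forces a short-block spine with small arborescences hanging off'' is simply false. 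A second gap is lemma~\textbf{(R)}: nothing in the paper lets you embed an arborescence rooted at a \emph{prescribed} vertex of $D$ within the arborescence budget; Lemma~\ref{lem:bikernel-perfect} does this only inside DAGs, and the gluing lemmas work in the opposite direction (attaching a path to an already-embedded tree, not attaching a tree to an already-embedded path).

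The paper does \emph{not} reduce the general case to the arborescence and $b$-block results; those are parallel applications of the same method, not subroutines. Its actual argument is an induction on $k$ with a dichotomy on the number of leaves. If $T$ has at least $\sqrt{5k/6}$ leaves, strip them all, apply induction, and restore them via Corollary~\ref{cor:gluing-leaves-addario-twice} at cost $4k-9$. If $T$ has fewer than $\sqrt{5k/6}$ leaves, decompose $T$ into at most $\sqrt{5k/6}$ root-to-leaf paths (Lemma~\ref{lem:dec-tree-in-paths}), cut each into pieces of length at most $\ell\approx\sqrt{6k/5}$, and build $T$ from scratch by gluing these $O(\sqrt{k})$ oriented paths one after another using Lemma~\ref{lem:oriented-gluing}, which charges $|T_i| + \ell(\ell+1)/2 - 1$ per glue. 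Summing and optimising the threshold gives the constant $8\sqrt{2/15}$. The engine is Lemma~\ref{lem:oriented-gluing} (and the partition Lemma~\ref{lem:oriented-partition} behind it), not the arborescence or $b$-block theorems.
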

Moreover, we give improved functions for the case of arborescences, showing in Theorem~\ref{thm:arbo} that they are $(\sqrt{4/3} \cdot k \sqrt{k} + k/2)$-universal (where $\sqrt{4/3}\simeq 1.15$).

Regarding results on specific oriented trees, a folklore observation yields the $(2k-2)$-universality of out-stars and in-stars. Indeed, a digraph forbidding $S_k^+$ (or $S_k^-$) is necessarily $(2k-4)$-degenerate, yielding chromatic number at most $2k-3$. 
One of the most important results on the matter is the Gallai-Roy-Hasse-Vitaver theorem \cite{gallaiConjecture} \cite{hasseConjecture} \cite{roy1967nombre} \cite{vitaver1962determination}, dating 1962, and stating that directed paths of order $k$ are $k$-universal.
A natural extension is the case of $b$-\emph{blocks} paths, which are oriented paths that can be arc-partitioned into $b$ maximal directed paths. In 2007, Addario-Berry et al. proved the following, improving on a result by El Sahili \cite{elsahiliPathsTwoBlocks2004}.
\begin{theorem}[Addario-Berry et al. \cite{addario-berryPathsTwoBlocks2007}]\label{thm:addario-2blocks}
    Every oriented path of order $k$ with $2$ blocks is $k$-universal.
\end{theorem}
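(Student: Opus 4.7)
Write the target $2$-block path $P$ with blocks of arc-lengths $a,b \geq 1$ and $a+b+1 = k$; up to reversing all arcs, assume $P$ has the \emph{peak} shape, with a central vertex $z$ admitting two internally vertex-disjoint in-paths of lengths $a$ and $b$. I would argue by contradiction: suppose $G$ satisfies $\chi(G) \geq k$ but contains no copy of $P$. The scaffolding is a refinement of Gallai-Roy. Fix a maximal acyclic spanning subdigraph $D$ of $G$, so that adding any arc of $G \setminus D$ closes a directed cycle of $D$. The level function $\ell(v) = $ length of the longest directed path in $D$ ending at $v$ is a proper coloring of the underlying graph of $G$: arcs of $D$ strictly raise $\ell$, while arcs of $G \setminus D$ strictly lower it by maximality of $D$. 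Hence some $v^*$ has $\ell(v^*) \geq a+b$, witnessed by a $D$-in-path $Q : v_0 \to v_1 \to \cdots \to v_{a+b} = v^*$.

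The next step is to locate an appropriate \emph{center} along $Q$. The natural candidates are the vertices $v_i$ for $b \leq i \leq a$: each admits, along $Q$ alone, an in-path of length $i \geq b$ and an out-path of length $a+b-i \geq b$, so $v_i$ is compatible with both the peak and the valley shape of $P$. Fixing $z = v_i$ in peak form, it remains to exhibit a second in-path to $z$ of length $b$ whose internal vertices avoid $\{v_0, \ldots, v_{i-1}\}$. A safe way to produce one would be to pick an in-neighbor $u$ of $z$ lying outside $Q$ and then extend $b-1$ steps backwards in $D$ along a branch avoiding $Q$, using that levels strictly decrease going backwards in $D$.

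The crux is to guarantee that at least one $i \in \{b, \ldots, a\}$ supplies such a branching in-neighbor. If none does, every in-arc at every $v_i$ not along $Q$ either closes onto $Q$ itself or dies within depth $b-1$ off $Q$; each such chord, combined with the level-decreasing property of arcs in $G \setminus D$, tightly constrains the level structure of $Q$, and the expectation is that rerouting these chords either produces $P$ directly or extracts a directed cycle in $D$, contradicting acyclicity. To obtain the needed density of candidate branches, I would reduce to a minimally $k$-chromatic $G$, in which every vertex has underlying degree at least $a+b$ --- this semi-degree bound is the lever that forces a safe off-$Q$ branch at some $v_i$. The valley shape is handled symmetrically by running the entire argument on the reverse digraph. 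The main difficulty is the bookkeeping of this branching step: naive Gallai-Roy paths carry no built-in parallel in-branch, so extracting the second block requires carefully balancing the choice of center against the chord structure along $Q$.
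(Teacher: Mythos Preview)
The paper does not prove Theorem~\ref{thm:addario-2blocks}; it is quoted from \cite{addario-berryPathsTwoBlocks2007} and used only as the base case $b=2$ of the induction in Theorem~\ref{thm:bblock}. There is therefore no proof in the present paper to compare your attempt against.

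As for the plan on its own merits, there is a genuine gap at what you yourself call the crux. You say that if no candidate centre $v_i$ admits an off-$Q$ in-branch of depth $b-1$ avoiding $Q$, then ``the expectation is'' that chord-rerouting yields $P$ or a directed cycle in $D$. No mechanism is supplied. Minimum underlying degree $a+b$ from $k$-criticality guarantees many neighbours of $v_i$, but gives no control over where their backward $D$-paths go: nothing you have written prevents every such path from re-entering $\{v_0,\dots,v_{i-1}\}$ after one step. There is also an indexing slip: for the peak shape you need the $Q$-segment ending at the centre to cover the \emph{longer} block, so the relevant centres are $v_i$ with $i\ge a$, not the range $b\le i\le a$ you state; with your range the in-path along $Q$ can be too short to serve as either block.

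For context, the proof in \cite{addario-berryPathsTwoBlocks2007} is not a maximal-acyclic-subdigraph-plus-branching argument. It works with a spanning out-forest optimised over a level statistic (a ``final'' forest); the optimality of the forest is precisely what forces, at a vertex of large enough level, a neighbour completing the second block disjointly from the first. That optimisation step is the substitute for the unproved branching claim in your outline.
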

A linear bound has also been achieved for $3$-block paths by El Joubbeh \cite{eljoubbehThreeBlocksPaths2021}, and for $4$-block paths by El Joubbeh and Ghazal \cite{eljoubbeh2024existencek}. In the same paper, they improve the general bound for $b$-block paths when $b$ is small compared to $n$.
We provide the first general linear bounds, with $b$ fixed, for paths with $b$ blocks.
\begin{restatable}{thm}{bblock}\label{thm:bblock}
    For $b\ge 2$, every oriented path of order $k$ with $b$ blocks is $((b-1)(k-3)+3)$-universal.
\end{restatable}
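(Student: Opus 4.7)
The plan is to prove Theorem~\ref{thm:bblock} by induction on the number of blocks $b$; the base case $b=2$ is exactly Theorem~\ref{thm:addario-2blocks}. For the inductive step, let $b \geq 3$, assume the result for $(b-1)$-block paths, and consider a digraph $D$ with $\chi(D) \geq (b-1)(k-3)+3$ and a $b$-block oriented path $P$ of order $k$ with block lengths $\ell_1, \ldots, \ell_b$. By reversing $D$ if necessary, we may assume the first block $B_1$ is forward.

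The idea is to peel off $B_1$ using the Gallai--Roy level function $f(v) :=$ length of the longest directed path of $D$ ending at $v$. Each level set $F_i := f^{-1}(i)$ is independent, every arc $uv$ satisfies $f(u) < f(v)$, and $\chi(D) = \max_v f(v) + 1$. Setting $A := \{v : f(v) \geq \ell_1\}$, the complement $V(D) \setminus A$ admits the proper coloring by levels $\{0, \ldots, \ell_1-1\}$, hence $\chi(D[A]) \geq \chi(D) - \ell_1$. A direct computation using $(k-3) + (b-3)\ell_1 \geq 0$ (valid for $b \geq 3$ and $k \geq 3$) yields $\chi(D[A]) \geq (b-2)((k-\ell_1)-3) + 3$, so the inductive hypothesis applied to the $(b-1)$-block sub-path $P' := v_{\ell_1+1} v_{\ell_1+2} \cdots v_k$ of order $k - \ell_1$ provides an embedding $\phi : V(P') \to A$. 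To finish, I would need to extend $\phi$ backwards along $B_1$ by locating a directed in-path $u_1 \to u_2 \to \cdots \to u_{\ell_1} \to \phi(v_{\ell_1+1})$ in $D$, using vertices disjoint from $\phi(V(P') \setminus \{v_{\ell_1+1}\})$.

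The hard part is arranging this disjointness. A canonical witness path of length $f(\phi(v_{\ell_1+1}))$ ending at $\phi(v_{\ell_1+1})$ traverses one vertex per successive level from $f(\phi(v_{\ell_1+1})) - \ell_1$ up to $f(\phi(v_{\ell_1+1}))$; if $f(\phi(v_{\ell_1+1})) = \ell_1$ exactly, its first $\ell_1$ vertices lie in $V(D) \setminus A$ and are trivially disjoint from $\phi(V(P'))$. However, because $B_2$ is backward, the arc $\phi(v_{\ell_1+2}) \to \phi(v_{\ell_1+1})$ with $\phi(v_{\ell_1+2}) \in A$ forces $f(\phi(v_{\ell_1+1})) \geq \ell_1 + 1$, so the canonical path intrudes into $A$ and may collide with $\phi(V(P'))$. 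I expect to resolve this either (i) by strengthening the inductive hypothesis to include a level-constraint on the image of the peeling vertex and re-verifying the base case $b=2$ with this constraint, or (ii) by observing that $D \setminus \phi(V(P') \setminus \{\phi(v_{\ell_1+1})\})$ still has chromatic number at least $(b-2)(k-3) + \ell_1 + 1$ and using a careful local analysis at $\phi(v_{\ell_1+1})$, possibly combined with a judicious choice of $\phi$, to locate an in-path of length $\ell_1$ ending there. Settling this local extension will be the technical heart of the proof.
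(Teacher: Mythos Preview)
Your high-level plan (induction on $b$, peel off one end-block, invoke the $(b-1)$-block case on what remains) is exactly the paper's, and the arithmetic reducing $\chi(D)-\ell_1$ to the threshold for a $(b-1)$-block path of order $k-\ell_1$ is correct. The gap is the reattachment step, which you yourself flag as unresolved. Neither proposed fix is carried out: option~(i) would force you to reprove Theorem~\ref{thm:addario-2blocks} with an extra level constraint on one endpoint, which is not available off the shelf; option~(ii) remains vague, since deleting the $k-\ell_1-1$ already-used vertices keeps the chromatic number high but does nothing to guarantee that an in-path of length $\ell_1$ ending at the \emph{specific} vertex $\phi(v_{\ell_1+1})$ survives the deletions.

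The paper avoids this collision problem by abandoning Gallai--Roy levels altogether. It packages the reattachment into a general gluing lemma (Lemma~\ref{lem:directed-gluing}): if $T'$ of order $k'$ is $c'$-universal, then appending a directed path of length $\ell$ yields a $(c'+k'+2\ell-3)$-universal tree. The proof rests on a partition lemma (Lemma~\ref{lem:directed-partition}) that produces disjoint sets $X,Y,Z$ with $D[Y]$ a maximal DAG, every $x\in X$ having both an in- and an out-neighbour in $Y$, every sink of $D[Y]$ starting a long directed path into $Z$, and $\chi(D[Z])$ small. The copy of $T'$ is found entirely inside $X$ and the appended directed path is routed through $Y\cup Z$, so disjointness is automatic by construction. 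With this lemma in hand, Theorem~\ref{thm:bblock} reduces to the one-line calculation $(b-2)(k-\ell-3)+3+(k-\ell)+2\ell-3\le(b-1)(k-3)+3$.
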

Along the way, this settles the conjecture for 3-block paths, giving us a bound of $2k-3$.
Nevertheless, reflecting the idea that oriented paths appear to give some slack on Burr's bound, Havet~\cite{havet2002trees} conjectured that every oriented tree of order $k$ with $\ell$-leaves is $k + \ell + 1$-universal.
Another notable example where the bound is known to be linear is when looking for antidirected trees \cite{addario-berryOrientedTreesDigraphs2013}.

When considering the question over restricted classes of graphs, only a few cases are known.
For instance, acyclic digraphs \cite{addario-berryOrientedTreesDigraphs2013}, or more generally bikernel-perfect digraphs, satisfy the conjecture. This is the building block for the $k^2/2 + O(k)$ bound of \cite{addario-berryOrientedTreesDigraphs2013}, and will also be key in our proof of Theorem~\ref{thm:main}.
Perhaps the most fruitful restriction to date is Sumner's conjecture \cite{sumnerConjecture}, predating Burr's conjecture, and stating that every tournament of order $2k-2$ contains every oriented tree of order $k$. A linear bound of $3k-3$ was achieved by El Sahili \cite{elsahiliTreesTournaments2004}, and the conjecture has been proved for sufficiently large $k$ by K{\"u}hn and Osthus \cite{kuhn2011sumner}.
Another interesting restriction is to consider digraphs with sufficiently large chromatic number compared to their order, as shown by Naia \cite{naia2022trees}.

\paragraph{Roadmap of our strategy}
A natural technique when trying to establish bounds for the universality of trees is to proceed by induction, usually starting from a star or a path.
A step of the induction goes as follows: given an oriented tree $T'$ which we know to be $c'$-universal, we consider a tree $T$ obtained by "gluing" simple subtrees to $T'$.
Here, gluing or appending a tree $T''$ to the tree $T'$ means considering the disjoint union of $T'$ and $T''$ and identifying one vertex of $T''$ to one vertex of $T'$.
We then derive a bound for the universality of $T$ in terms of $|T'|$, $c'$, and the subtree being glued.
It was shown in \cite{addario-berryOrientedTreesDigraphs2013} that augmenting $T'$ with $\ell$ out-neighbours appended to arbitrary vertices yields a tree $T$ that is $(c' + 2 |T| - 4)$-universal. This already yields a quadratic bound for the conjecture, where the worst cases are given by orientations of paths, and more generally trees with few leaves.

The cornerstone of our contributions is the establishment of such a gluing lemma appending oriented paths, and an improved version for directed paths.
We leverage this in the following win-win strategy when deriving universality bounds for some $T$ by induction.
If $T$ has many leaves (more than $O(\sqrt{k})$), we apply the gluing lemma of \cite{addario-berryOrientedTreesDigraphs2013} to the tree obtained by removing the leaves of $T$.
Otherwise, we observe $T$ is decomposable into few oriented paths, which we can glue one by one a limited number of times through Lemma~\ref{lem:oriented-gluing}. 

\paragraph{Further directions}

Our ability to control the growth of our universality bound for trees as $O(k \sqrt{k})$ stems from our construction of trees, and corresponding bounds, by gluing both paths and leaves.
In the border cases where a tree can be built by adding many leaves at each step, or by gluing few paths, better bounds can be obtained (see Proposition 8 in \cite{addario-berryOrientedTreesDigraphs2013} for leaves).
Nevertheless, each gluing operation to some tree $T$ increases the bound by at least $|T|$.
Thus, if a tree falls between these two categories, such as a star of degree $\sqrt{k}$ subdivided $\sqrt{k}$ times, our current techniques do not allow us to improve on the $O(k \sqrt{k})$ bound.
Adapting one of the gluing lemmas to get rid of this $O(|T|)$ cost may allow us to skew our win-win strategy towards that type of gluing, and yield a better bound.
Another way to improve our strategy is to manage to glue multiple paths in parallel, or even more general trees.
Note that the cost of gluing a path depends quadratically on its size, so we are still far from a linear bound for paths. Nevertheless, even an improvement in that direction would not directly yield bounds better than $O(k \sqrt{k})$ for the general case, as witnessed by the case of arborescences.

\paragraph{Structure of the paper}

In Section~\ref{sec:preliminaries}, we introduce various technical tools, and the gluing lemma for leaves. In Section~\ref{sec:arborescences}, we prove our gluing lemma for directed paths, deriving a $(b-1)(k-3)+3$ bound for $b$-block paths in Theorem~\ref{thm:bblock}, and a $\sqrt{4/3} \cdot k \sqrt{k} + O(k)$ bound for arborescences in Theorem~\ref{thm:arbo}.
In Section~\ref{sec:oriented}, we prove our gluing lemma for oriented paths and derive Theorem~\ref{thm:main}, showing that oriented trees of order $k$ are  $8 \sqrt{2/15} \cdot k \sqrt{k} + O(k)$-universal.
The proofs for arborescences and general oriented trees follow the same approach, with only the technical lemmas differing in nature.

\section{Preliminaries}\label{sec:preliminaries}

Throughout the paper, all graphs and digraphs we consider are simple. In general, we follow standard terminology, which can be found in Bang-Jensen and Gutin~\cite{bang-jensenDigraphs2009}.
The {\it order} $|D|$ of a (di)graph $D$ is the size of its vertex set. 
A {\it digon} in a digraph is an oriented cycle of length 2.
Digraphs may contain digons, whereas {\it oriented graphs} do not.
The union $D \cup H$ of two (di)graphs is the (di)graph on vertex set $V(D) \cup V(H)$, with edge (arc) set the union of the edges (arcs) of $D$ and $H$. 
When clear from the context, we may say tree or path to refer to an oriented tree or oriented path.
A (proper) {\it colouring} of a (di)graph $D$ is an assignment of colours to $V$ such that any pair of adjacent vertices is coloured differently. Then, the {\it chromatic number} of $D$, denoted $\chi(D)$, is the minimal number of colours required in such a colouring.
A {\it directed acyclic graph}, \emph{DAG} for short, is a directed graph containing no directed cycles.
Given a digraph $D$, an {\it in-kernel} (respectively {\it out-kernel}) of $D$ is an independent $K$ set that in-dominates (respectively out-dominates) $D$. That is, every vertex in $V(D) - K$ admits an in-neighbour (respectively out-neighbour) in $K$.
It is well-known that DAGs both an in-kernel and an out-kernel~\cite{neumann1944kernel}.

\subsection{Universality in Directed Acyclic Graphs}

The following lemma is an easy extension of a result from~\cite{addario-berryOrientedTreesDigraphs2013}
stating that oriented trees on $k$ vertices are $k$-universal in the class of acyclic digraphs. 
For the sake of completeness, we provide its proof below.

\begin{lemma}\label{lem:bikernel-perfect} 
Let $T$ be an oriented tree of order $k$, and $D$ be a directed acyclic graph such that $\chi(D) \geq k$, then $D$ contains $T$.
Moreover, rooting $T$ in any vertex $r$, $D$ admits a partition $(X,K)$ of its vertex set with $\chi(D[K]) \leq k-1$ and such that for every vertex $x$ of $X$, there exists a copy of $T$ in $D$ where $r$ is identified to $x$ and all other vertices of $T$ lie in $K$.
\end{lemma}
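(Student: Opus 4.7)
The plan is to proceed by induction on $k$, leveraging the existence of in- and out-kernels in DAGs to build the embedding one leaf at a time. The base case $k = 1$ is immediate: take $X = V(D)$ and $K = \emptyset$.

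For the inductive step with $T$ of order $k \ge 2$ rooted at $r$, I would first pick a leaf $\ell \neq r$ of $T$ (which exists since every tree of order at least $2$ has at least two leaves); letting $p$ be its unique neighbour in $T$, set $T' := T - \ell$, which is an oriented tree of order $k-1$ naturally rooted at $r$. Up to reversing all arcs of $D$, I may assume that the pendant arc is oriented from $p$ to $\ell$. Since $D$ is acyclic it admits an out-kernel $K_1$, and $D' := D - K_1$ is a DAG with $\chi(D') \ge \chi(D) - 1 \ge k-1$, using that $K_1$ is independent. Applying the inductive hypothesis to $T'$ rooted at $r$ and to $D'$ would then yield a partition $(X', K')$ of $V(D')$ with $\chi(D'[K']) \le k-2$ and the corresponding embedding property.

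I would then set $X := X'$ and $K := K' \cup K_1$, so that $(X, K)$ partitions $V(D)$ and $\chi(D[K]) \le (k-2) + 1 = k-1$. For each $x \in X$, take the embedding $\varphi$ of $T'$ in $D'$ with $\varphi(r) = x$ and the rest of $T'$ mapped into $K'$; then $w := \varphi(p)$ lies in $V(D') = X' \cup K'$ and thus outside $K_1$, so $w$ has an out-neighbour $z \in K_1$. Because the image of $\varphi$ is contained in $V(D')$, the vertex $z$ is fresh, and extending $\varphi$ by $\ell \mapsto z$ produces the required copy of $T$ with $r \mapsto x$ and all other vertices in $K$. The symmetric case where the pendant arc is oriented from $\ell$ to $p$ is handled identically using an in-kernel. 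The first assertion of the lemma (that $D$ contains $T$) follows immediately: $X$ must be nonempty, for otherwise $\chi(D) = \chi(D[K]) \le k-1$, contradicting the hypothesis.

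I do not expect any real obstacle: this is a textbook kernel-based induction. The only points to keep track of are the case split on the orientation of the pendant arc, resolved by choosing either an in- or an out-kernel, and the disjointness of $K_1$ from $V(D')$, which is what guarantees that the newly selected vertex $z$ cannot collide with the previously placed images of $T'$.
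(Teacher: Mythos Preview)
Your argument is correct and follows essentially the same kernel-based induction as the paper: remove a non-root leaf, peel off an appropriate kernel (in- or out- according to the orientation of the pendant arc), apply the induction hypothesis to the smaller DAG, and extend each embedding by one vertex drawn from the kernel. The only cosmetic wrinkle is the phrase ``up to reversing all arcs of $D$'', which as written does not change the orientation of the pendant arc in $T$; but since you explicitly handle the symmetric case with an in-kernel anyway, the proof is complete.
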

\begin{proof}
We prove the stronger second statement by induction on $k$.
If $k=1$, then we choose $X=V(D)$ and $K=\emptyset$.
Now, consider $k>1$ and a tree $T$ on $k$ vertices with root $r$. As $k>1$, there exists a leaf $l$ in $T$ different from $r$.
Denote $T\setminus \{l\}$ by $T'$ and let $l'$ be the (only) neighbour of $l$ in $T$.
Without loss of generality, we can assume that $l'$ is an in-neighbour of $l$.
Let $K_0$ be an in-kernel of $D$, which exists has $D$ is acyclic.
Since $K_0$ is independent, $\chi(D\setminus K_0)\ge k-1$, for otherwise colouring $K_0$ with a single colour would yield $\chi(D) \leq k-1$. 
Then, the induction hypothesis provides a partition $(X',K')$ of $V(D\setminus K_0)$ for the tree $T'$.
Now, let us show that $(X=X',K=K'\cup K_0)$ is the desired partition of $D$ for $T$.

First, note that $\chi(D[K]) \leq \chi(D[K'])+ \chi(D[K_0]) \le (k-2)+1 = k-1$.
Moreover, for any vertex $x$ in $X$, there exists a copy of $T'$ in $D\setminus K_0$ where r is identified to $x$ and the other vertices of $T'$ lie in $K'$.
In particular $l'$ is in $K'$ and as $K_0$ is an in-kernel of $D$, $l'$ has an out-neighbour in $K_0$ to which we can identify $l$.
\end{proof}

\subsection{Gluing leaves}

We introduce the gluing lemma of \cite{addario-berryOrientedTreesDigraphs2013}, the first example of deriving a universality bound for a tree constructed by some other tree for which we have a bound.
Let $T$ be an oriented tree.
Let $u$ be a leaf of $T$ and consider $u'$ its unique neighbour in $T$.
If the arc between $u$ and $u'$ is oriented towards $u$, we say that $u$ is an {\it out-leaf} of $T$, otherwise, we say that $u$ is an {\it in-leaf} of $T$.
We let $Out(T)$ (resp. $In(T)$) denote the set of all the out-leaves (resp. in-leaves) of $T$.
\begin{lemma}[Addario-Berry et al. \cite{addario-berryOrientedTreesDigraphs2013}]
\label{lem:gluing-leaves-addario}
    For $k \geq 3$, let $T$ be an oriented tree of order $k$ other than $S_{k}^+$ or $S_{k}^-$. If $T - Out(T)$, respectively $T - In(T)$ is $c$-universal, then $T$ is $(c+2k-4)$ universal.
\end{lemma}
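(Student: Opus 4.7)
The plan is to find a copy of $T' := T - \mathrm{Out}(T)$ inside a carefully chosen subdigraph of $D$, and then to extend it to a copy of $T$ by embedding the out-leaves of $T$ one at a time. By reversing all arcs of $D$, it is enough to treat the case in which $T - \mathrm{Out}(T)$ is $c$-universal. Moreover, replacing every digon of $D$ by a single arc yields an oriented subdigraph $D'$ with $\chi(D') = \chi(D)$, so I may assume $D$ itself is oriented.

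Fix a threshold $d$ (to be tuned) and set $X = \{v \in V(D) : d^+_D(v) \geq d\}$ and $Y = V(D) \setminus X$. Each vertex of $D[Y]$ has out-degree at most $d - 1$, so a standard averaging argument shows that every subgraph of $D[Y]$ contains a vertex of undirected degree at most $2(d-1)$. Hence $D[Y]$ is $(2d-2)$-degenerate, giving $\chi(D[Y]) \leq 2d - 1$. Since $\chi$ is subadditive with respect to vertex partitions,
\[
\chi(D[X]) \;\geq\; \chi(D) - \chi(D[Y]) \;\geq\; c + 2k - 3 - 2d.
\]
Choosing $d$ so that the right-hand side is at least $c$ and invoking the $c$-universality of $T'$ provides a copy $\phi$ of $T'$ in $D[X]$, with each image $\phi(v)$ having out-degree at least $d$ in $D$.

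To extend $\phi$ to an embedding of $T$, I would assign to each out-leaf $u$ of $T$ with parent $v \in V(T')$ an out-neighbour of $\phi(v)$ in $D$, distinct from $\phi(V(T'))$ and from the previously assigned out-leaves. Phrased as a system of distinct representatives in the bipartite graph between $\mathrm{Out}(T)$ and $V(D) \setminus \phi(V(T'))$ whose adjacencies are inherited from the arcs of $D$ at the parent images, the extension reduces via Hall's theorem to a counting condition on unions of out-neighbourhoods of the $\phi(v)$'s. The naive greedy procedure succeeds as long as every image $\phi(v)$ that is a parent of some out-leaf has at least $k - 1$ out-neighbours in $D$: at the step placing the last out-leaf, the already-used set has size $k - 1$ and only $\phi(v)$ itself is guaranteed not to be an out-neighbour of $\phi(v)$.

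The main obstacle is to tune $d$ so that both constraints can be met simultaneously. Taking $d = k - 1$ makes the greedy extension immediate but only yields $\chi(D[X]) \geq c - 1$; taking $d = k - 2$ gives $\chi(D[X]) \geq c + 1$, at the cost of the greedy extension being tight on the very last out-leaf. The hypothesis $T \notin \{S_k^+, S_k^-\}$, which forces $|T'| \geq 2$, is the key structural lever for closing this one-vertex gap: either by choosing $\phi$ among the several copies of $T'$ available in the $(c+1)$-chromatic $D[X]$ so as to avoid the extremal configuration in which all $k-2$ out-neighbours of some $\phi(v)$ end up already used, or by iteratively removing one additional low-out-degree vertex from $D[X]$ while preserving $\chi \geq c$ and then applying the greedy extension to the resulting subdigraph. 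I expect this careful accounting between the chromatic budget and the out-degree requirement to be the heart of the proof.
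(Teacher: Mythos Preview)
The paper does not give a proof of this lemma; it is cited from Addario-Berry et al.\ and used as a black box. So there is no in-paper argument to compare against, only the soundness of your outline to assess.

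Your framework is the natural one: split $V(D)$ by an out-degree threshold $d$, bound $\chi(D[Y])$ by degeneracy, embed $T'=T-\mathrm{Out}(T)$ in $D[X]$, and extend greedily to the out-leaves. But you stop exactly where the real work begins. With $d=k-2$ the greedy extension can genuinely fail on the final out-leaf: if its parent $p$ satisfies $d^+_D(\phi(p))=k-2$ and $N^+(\phi(p))$ coincides with the other $k-2$ already-used vertices, there is no candidate left. Neither of your proposed remedies is carried out, and neither works as stated. ``Choosing $\phi$ among several copies'' does not follow from $\chi(D[X])\ge c+1$ alone: the one extra colour lets you delete a single prescribed vertex and still find $T'$, not avoid an entire bad configuration on $k-1$ vertices. ``Iteratively removing one low-out-degree vertex while preserving $\chi\ge c$'' exhausts that single unit of chromatic slack after one removal, with no reason the new copy of $T'$ escapes the same obstruction at a different vertex. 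Your own closing sentence concedes that this accounting is ``the heart of the proof''; as written, the proposal is an outline whose central step is missing, not a proof.
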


The next corollary follows by performing two gluing operations in a row, one for $Out(T)$ and one for $In(T)$.
\begin{corollary}
\label{cor:gluing-leaves-addario-twice}
    For $k \geq 3$, let $T$ be an oriented tree of order $k$ other than $S_{k}^+$ or $S_{k}^-$.
    Let $L(T)$ denote the set of all leaves of $T$. If $T-L(T)$ is $c$-universal, then $T$ is $(c+4k-9)$-universal. 
\end{corollary}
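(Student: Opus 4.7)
The plan is to chain two applications of Lemma~\ref{lem:gluing-leaves-addario}, introducing the intermediate tree $T' := T - In(T)$, of order $|T'| = k - |In(T)|$. We may first assume $|In(T)| \geq 1$: otherwise $T - L(T) = T - Out(T)$, and a single application of Lemma~\ref{lem:gluing-leaves-addario} to $T$ already yields the stronger bound that $T$ is $(c + 2k - 4)$-universal. In particular, $|T'| \leq k - 1$.

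The crucial step is to observe that $T' - Out(T')$ is an induced subtree of $T - L(T)$. Indeed, because $k \geq 3$ no two leaves of $T$ are adjacent, so deleting the in-leaves of $T$ does not affect any out-leaf of $T$; hence $Out(T) \subseteq Out(T')$. Consequently
\[
V(T' - Out(T')) \;=\; V(T) \setminus (In(T) \cup Out(T')) \;\subseteq\; V(T) \setminus L(T) \;=\; V(T - L(T)),
\]
and both sides, viewed as induced subgraphs of $T$, coincide on this common vertex set. Since $T - L(T)$ is $c$-universal, every digraph $D$ with $\chi(D) \geq c$ contains $T - L(T)$ and therefore its subtree $T' - Out(T')$; so $T' - Out(T')$ is itself $c$-universal.

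Now I would apply Lemma~\ref{lem:gluing-leaves-addario} to $T'$, concluding that $T'$ is $(c + 2|T'| - 4)$-universal, and then apply the same lemma a second time to $T$ (this time with $T - In(T) = T'$ playing the role of the smaller tree), obtaining that $T$ is $(c + 2|T'| + 2k - 8)$-universal. Bounding $|T'| \leq k - 1$ gives the desired $(c + 4k - 9)$-universality (in fact the slightly stronger $(c + 4k - 10)$-universality).

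The main delicate point I anticipate is the edge case in which $T'$ itself happens to be an oriented star $S_{|T'|}^+$ or $S_{|T'|}^-$, since Lemma~\ref{lem:gluing-leaves-addario} cannot be invoked on $T'$ in that case. The workaround is to replace its first application by the folklore $(2|T'| - 2)$-universality of oriented stars; the second application of Lemma~\ref{lem:gluing-leaves-addario} then yields a bound of $2|T'| + 2k - 6 \leq 4k - 8$ for $T$. Since $c \geq 1$ whenever $T - L(T)$ is nonempty, which holds for any non-star $T$ on $k \geq 3$ vertices, this remains within the claimed $(c + 4k - 9)$-universality, closing the argument.
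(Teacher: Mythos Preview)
Your proof is correct and follows essentially the same approach as the paper: two chained applications of Lemma~\ref{lem:gluing-leaves-addario} via an intermediate tree, with a separate treatment of the star edge case using the folklore $(2m-2)$-universality of stars together with $c\ge 1$. The only cosmetic difference is that you remove $In(T)$ first and then $Out(T')$, whereas the paper removes $Out(T)$ first and then $In(T')$; your justification that $c\ge 1$ because $T-L(T)$ is nonempty is in fact cleaner than the paper's corresponding sentence.
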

\begin{proof}
We may assume without loss of generality that $T$ has at least one out-leaf, dealing with the case where $T$ has at least in-leaf symmetrically.
Our goal is to apply two gluing operations in a row, once for the out-leaves, then once for the in-leaves. 
Before being able to do so, we must take care of the case where $T-Out(T)$ is an in-star or an out-star.
Then, since $T$ has an out-leaf, $|T-Out(T)| \leq k-1$, and recall Burr's conjecture holds for $T-Out(T)$, hence this tree is $2(k-1)-2 = 2k-4$-universal.
In turn, Lemma~\ref{lem:gluing-leaves-addario} yields that $T$ is $4k-8$-universal. Since $T-L(T)$ consists of a single vertex, which is $1$-universal, $T$ satisfies the lemma.

We may now assume $T' = T-Out(T)$ is neither an out-star nor an in-star.
Then, we let $T''$ denote the tree $T'-In(T')$.
Observe $T''$ is a subgraph of $T-L(T)$, so any digraph containing a copy of $T-L(T)$ contains a copy of $T''$.
Therefore, $T''$ is $c$-universal, and since $|T'| \leq k-1$, Lemma~\ref{lem:gluing-leaves-addario} yields that $T'$ is $(c+2k-6)$-universal.
Applying the same lemma again on $T$, with $|T|=k$, we conclude that $T$ is $(c+4k-10)$-universal, concluding the proof.
\end{proof}

\subsection{Decomposing trees into paths}

The following decomposition of a tree into paths is rather classical, we prove it for the sake of completeness.
Here, we state the lemma in terms of undirected trees.
We will later apply it to oriented trees, and when doing so, we are implicitly decomposing the underlying tree into paths, and assume the paths given by the decomposition preserve their initial orientation.
Let $T$ be an undirected tree, rooted in some vertex $r$.
A path $P=v_1,\dots ,v_{\ell}$ of $T$ is {\it descending} if $v_i$ is the parent of $v_{i+1}$ in $T$ for $i=1,\dots , \ell-1$. Then, we consider $v_1$ as the {\it beginning} of $P$.

\begin{lemma}
\label{lem:dec-tree-in-paths}
    Let $T$ be an undirected rooted tree with $p$ leaves.
    Then, there exist $p$ descending paths $P_1, \dots ,P_p$ of $T$ such that for every $i \in [1,p-1]$ the path $P_{i+1}$ and $T_i = \bigcup_{j=1}^{i}P_j$ intersect in the vertex beginning $P_{i+1}$, and $T = T_p = \bigcup_{j=1}^{p}P_j$.
\end{lemma}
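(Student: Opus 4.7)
The plan is to construct the paths iteratively via a greedy procedure. Initialize by picking any leaf $\ell_1$ of $T$ and letting $P_1$ be the unique descending path from the root $r$ to $\ell_1$; set $T_1 := P_1$. Then, for $i \geq 1$ with $T_i \neq T$, pick any leaf $\ell_{i+1}$ of $T$ not in $V(T_i)$, let $v_{i+1}$ be the deepest ancestor of $\ell_{i+1}$ (with respect to the rooting at $r$) that lies in $T_i$, and let $P_{i+1}$ be the descending path from $v_{i+1}$ down to $\ell_{i+1}$.

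I would first verify, by induction on $i$, the auxiliary invariant that $T_i$ is a connected subtree of $T$ containing $r$. This is immediate: $T_1 = P_1$ contains $r$ by construction, and $T_{i+1} = T_i \cup P_{i+1}$ is obtained by attaching the descending path $P_{i+1}$ to $T_i$ at the single vertex $v_{i+1} \in V(T_i)$, so connectivity and the presence of $r$ are preserved. This invariant also guarantees that $v_{i+1}$ is well defined, since $r$ is an ancestor of every vertex of $T$ and $r \in V(T_i)$, so the set of ancestors of $\ell_{i+1}$ lying in $T_i$ is non-empty.

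Next, the required intersection property $V(P_{i+1}) \cap V(T_i) = \{v_{i+1}\}$ is an immediate consequence of the choice of $v_{i+1}$: any vertex of $P_{i+1}$ strictly below $v_{i+1}$ is an ancestor of $\ell_{i+1}$ deeper than $v_{i+1}$, so by the minimality of the depth of $v_{i+1}$ it cannot lie in $T_i$; and $\ell_{i+1}$ itself was chosen outside $V(T_i)$. By construction $v_{i+1}$ is the first (topmost) vertex of $P_{i+1}$, so it is indeed the vertex beginning $P_{i+1}$.

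It then remains to argue that the procedure terminates after exactly $p$ iterations with $T_p = T$. The key observation is that every internal vertex of $P_{i+1}$ has a child in $T$ (namely its successor on $P_{i+1}$), hence is not a leaf of $T$; therefore the number of leaves of $T$ contained in $T_{i+1}$ equals the number contained in $T_i$ plus one (the new leaf $\ell_{i+1}$). After $p$ steps, $T_p$ contains all $p$ leaves of $T$. Since $T_p$ is a connected subtree of $T$ containing $r$ together with every leaf, and since every vertex of $T$ lies on the path from $r$ to some leaf, we conclude $T_p = T$. The main obstacle, which is very mild here, is purely the bookkeeping around the ancestor relation: ensuring the deepest ancestor in $T_i$ always exists and that each step adds exactly one new leaf so that the count of paths matches $p$.
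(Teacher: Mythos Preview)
Your proof is correct and follows essentially the same greedy construction as the paper: pick a leaf not yet covered, take the descending path from its lowest ancestor already in $T_i$ down to the leaf, and iterate; the paper phrases this as ``the last vertex of the root-to-leaf path lying in $T_i$'', which is exactly your deepest ancestor. One minor slip: you write ``minimality of the depth of $v_{i+1}$'' where you mean \emph{maximality}, but the intended argument is clear and sound.
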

\begin{proof}
Let us denote the leaves of $T$ as $(l_1,...,l_p)$.
First, we define $P_1$ as the path from the root $r$ of $T$ to $l_1$, which contains no other leaf.
Then, assume we have defined descending paths $P_1,\dots ,P_i$ for $i < p$ satisfying the conditions of the lemma.
Assume also that $T_i=\bigcup_{j=1}^i P_j$ contains exactly leaves $(l_j)_{j \leq i}$, and in particular $l_{i+1} \notin T_i$.
We consider the descending path $P'_{i+1}$ from $r$ to $l_{i+1}$, noting $r \in T_i$, $l_{i+1} \notin T_i$, and $T_i$ is a subtree of $T$.
This allows us to define $P_{i+1}$ as the subpath of $P'_{i+1}$ beginning at the last vertex in $T_i$, and ending in $l_{i+1}$.
Then, $T_i$ and $P_{i+1}$ only intersect in the vertex beginning $P_{i+1}$, and $T_{i+1} = T_i \cup P_{i+1}$ contains exactly $(l_j)_{j \leq i+1}$.
When this process ends, we have defined $p$ paths $P_1,\dots ,P_p$, such that $T_p=  \bigcup_{j=1}^{p} P_j =T$.
\end{proof}

\section{Arborescences and \texorpdfstring{$b$}{b}-block paths}
\label{sec:arborescences}

In this section, we tackle the universality of arborescences and paths with $b$ blocks.
Consider a tree $T'$ for which we have a universality bound $c'$.
Our goal is to obtain a universality bound for the tree $T$ obtained by gluing a directed path of length $\ell$ to some vertex in $T'$.
It turns out that gluing directed paths as such is considerably cheaper, in terms of the increase on the bound, than gluing arbitrary orientations of paths.
This gives us improved bounds for arborescences and $b$-blocks paths, and more generally for any oriented tree that decomposes into few directed paths.

\subsection{Gluing a directed path}

We first show a technical result, which provides us with all the structure required to prove the gluing lemma for directed paths.
\begin{lemma}\label{lem:directed-partition}
For every integer $\ell \ge 0$, and every digraph $D$, there exists a partition of $V(D)$ into sets $X,Y,Z$ such that:
\begin{itemize}
    \item[(X)] Every vertex $x \in X$ admits both an in-neighbour $y^-$ and an out-neighbour $y^+$ in $Y$,
    \item[(Y)] $D[Y]$ is a DAG, and every sink of $D[Y]$ is the beginning of a directed path of length $\ell$ whose remaining vertices are all in $Z$.
    \item[(Z)] $\chi(D[Z])\le \ell$.
\end{itemize}
Symmetrically, there also exists a partition $(X,Y,Z)$ for $D$ satisfying $(X)$, $(Z)$ and the following in place of $(Y)$:
\begin{itemize}
\item[$(\overleftarrow{Y})$] $D[Y]$ is a DAG, and every source of $D[Y]$ is the endpoint of a directed path of length $\ell$ whose other vertices are all in $Z$.
\end{itemize}
\end{lemma}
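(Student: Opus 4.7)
The plan is to proceed by induction on $\ell \ge 0$. The base case $\ell = 0$ follows the classical maximal-DAG argument: take $Y$ to be a vertex-maximal subset of $V(D)$ with $D[Y]$ acyclic, and set $X = V(D) \setminus Y$ and $Z = \emptyset$. For any $x \in X$, maximality forces $D[Y \cup \{x\}]$ to contain a directed cycle through $x$, which witnesses both an in- and an out-neighbour of $x$ in $Y$. Conditions $(Y)$ and $(Z)$ hold vacuously (path of length zero, empty $Z$).

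For the inductive step, I consider the collection of tuples $(Y, Z_1, \ldots, Z_\ell)$ of pairwise disjoint subsets of $V(D)$ satisfying: $D[Y]$ is a DAG, each $Z_i$ is independent in $D$, every sink of $D[Y]$ has an out-neighbour in $Z_1$, and every vertex of $Z_i$ has an out-neighbour in $Z_{i+1}$ for $1 \le i < \ell$. The all-empty tuple belongs to this family, so a maximizer of $|Y| + \sum_i |Z_i|$ exists; I break ties by preferring larger $|Y|$. Setting $Z = \bigcup_i Z_i$ and $X = V(D) \setminus (Y \cup Z)$, conditions $(Y)$ and $(Z)$ come for free: assigning colour $i$ to $Z_i$ yields $\chi(D[Z]) \le \ell$, and chaining out-neighbours from a sink of $D[Y]$ through $Z_1, \ldots, Z_\ell$ produces the required directed path of length $\ell$.

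To verify $(X)$, I fix $x \in X$ and try to enlarge the tuple by placing $x$ into one of $Y, Z_1, \ldots, Z_\ell$. If $D[Y \cup \{x\}]$ is not acyclic, then any cycle through $x$ furnishes both an in- and an out-neighbour in $Y$, and we are done. Otherwise, the only possible new sink of $D[Y \cup \{x\}]$ is $x$ itself (old sinks retain their $Z_1$-neighbours), so maximality forces $x$ to be a sink of $D[Y \cup \{x\}]$ with no out-neighbour in $Z_1$; equivalently, $x$ has no out-neighbour in $Y \cup Z_1$. Blocking the extension of $x$ into $Z_j$ for each $j$ likewise produces either an arc between $x$ and $Z_j$ or, when $j < \ell$, the absence of an out-neighbour in $Z_{j+1}$, cascading into tight structural information on $x$'s neighbourhood.

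The main obstacle will be closing this cascade: showing that the remaining configuration admits a local exchange (swapping $x$ with a vertex of some $Z_j$ and reassigning the displaced vertex) that strictly increases either the total weight or the tie-breaker $|Y|$, contradicting maximality. Once this case is resolved, the symmetric statement involving $(\overleftarrow{Y})$ follows by applying the established version to the reverse digraph $D^{\mathrm{rev}}$ and reversing the obtained paths.
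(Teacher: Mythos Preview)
Your base case is correct and coincides with the paper's. The divergence is in the ``inductive step'', which is not actually inductive: you never invoke the hypothesis for $\ell-1$, and instead attempt a one-shot extremal argument over tuples $(Y,Z_1,\dots,Z_\ell)$. That is a genuine gap, and you flag it yourself: the local-exchange step is not carried out, and it is not clear it can be. Concretely, once you are in Case~2 you have already shown that $x$ has no out-neighbour in $Y$; from that point on you need a contradiction with maximality, not merely ``tight structural information''. Swapping $x$ into some $Z_j$ and displacing a vertex $z$ runs into two independent obstructions: $x$ may have several neighbours in $Z_j$ (so $Z_j\setminus\{z\}\cup\{x\}$ is still not stable), and $z$ may be the unique out-neighbour in $Z_j$ of some vertex in $Z_{j-1}$ or some sink of $D[Y]$ (so the chain condition breaks). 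Your potential $|Y|+\sum_i|Z_i|$ with the single tie-break on $|Y|$ does not obviously rule these out, and no alternative lexicographic order is proposed.

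The paper avoids this entirely by using the induction hypothesis in a structural way. Given a partition $(X',Y',Z')$ for $\ell$, it sets $S'=\{\text{sinks of }D[Y']\}$, takes $Z=Z'\cup S'$ (so $\chi(D[Z])\le \ell+1$ since $S'$ is stable), and then lets $Y\supseteq Y'\setminus S'$ be maximal acyclic inside $V\setminus Z$. Property~(X) follows from maximality exactly as in your base case. The point you are missing is why the \emph{new} sinks of $D[Y]$ behave: any $x'\in Y\cap X'$ came with a cycle through $x'$ in $D[Y'\cup\{x'\}]$, so its out-neighbour $y'^{+}$ on that cycle has a directed path to $y'^{-}$ inside $Y'$; provided $y'^{+}\neq y'^{-}$ (secured by first replacing each digon by a single arc), $y'^{+}$ is not a sink of $D[Y']$, hence $y'^{+}\in Y'\setminus S'\subseteq Y$. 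Thus every vertex added from $X'$ has an out-neighbour in $Y$ and is not a sink; all sinks of $D[Y]$ lie in $Y'\setminus S'$ and therefore have an out-neighbour in $S'$, extending the inductive path by one arc. This ``non-sinkness of newly absorbed vertices'' is the idea that replaces your unfinished exchange argument.
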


\begin{proof}
    Throughout this proof, we substitute $D$ with an oriented subdigraph obtained by replacing each digon of $D$ with an arc arbitrarily, noting this preserves the chromatic number.
    The resulting partition will also satisfy the conditions in the initial digraph.
    Note that to obtain the second partition, it suffices to reverse all the arcs of $D$ and consider the first partition on the resulting digraph.
    Therefore, we show how to obtain the first partition, proceeding by induction on $\ell$.
    
    For $\ell = 0$, let $Z = \emptyset$, let $Y$ be the vertex set of a maximal induced acyclic subdigraph of $D$, and let $X = V(D)\setminus Y$.
    Properties (Y) and (Z) hold trivially.
    For property (X), note that by maximality of $Y$, for any vertex $x\in X$ the graph $D[Y\cup\{x\}]$ contains a cycle passing through $x$. The neighbours $y^-,y^+$ of $x$ on the cycle belong to $Y$, which yields property (X).

    Assume by induction that the lemma holds for a fixed $\ell \geq 0$, and let us denote $X',Y',Z'$ the corresponding subsets of $V(D)$.
    We will build $X,Y,Z$ satisfying the lemma for $\ell+1$, and refer the reader to Figure~\ref{fig:gluing-directed} for a sketch of the construction.
    Informally, in trying to satisfy (Y), we transfer sinks $S'$ of $Y'$ to $Z'$, defining $Z$. Then, to satisfy (X), we transfer vertices from $X'$ to the remaining vertices of $Y'$, defining $Y$, after which $X$ consists of the vertices left in $X'$.
    We then argue that the sinks $S$ of $Y$ admit an out-neighbour in $S'$, allowing us to extend the directed path given by induction by an arc.

    Let $S' \subseteq Y'$ be the set of sinks in $D[Y']$, shown in darker red, note that $S'$ is a stable set in $D[Y']$, and thus also in $D$.    
    Letting $Z= Z' \cup S'$, we then have $\chi(D[Z])\le \chi(D[Z']) + 1 \le \ell+1$, satisfying (Z). 

    We now turn to defining $Y$ and $X$.
    Consider the subdigraph $D[Y'\setminus S']$, depicted in lighter green, which is acyclic since $D[Y']$ is.
    We define $Y$ as a vertex set containing $Y'\setminus S'$, and which induces a maximal directed acyclic graph in $D[V \setminus Z]$. Then, we let $X = X' \setminus Y$.
    The set $Y$ can be obtained by starting with $Y=Y'$ and iteratively adding vertices of $X'$ (shown in darker green), while maintaining that $D[Y]$ is acyclic.
    After having considered all the vertices of $X'$, we are guaranteed that for every $x \in X$, there is a cycle going through $x$ in $D[Y\cup\{x\}]$.
    The neighbours $y^-,y^+$ of $x$ on the cycle belong to $Y$, which yields property (X).

    We now turn to property (Y), letting $S \subseteq Y$ be the set of sinks in $D[Y]$.
    Recall $Y = (Y' \setminus S') \cup (Y \cap X')$, and let us first show that $S \subseteq Y' \setminus S'$, meaning none of the sinks of $D[Y]$ belong to $X'$. 
    Indeed, consider any $x' \in Y \cap X'$, and note the vertex $y'^+ \in Y'$ given by property (X') must belong to $Y'\setminus S'$.
    This is because of our assumption that $D$ is oriented, so $y'^+ \neq y'^-$, and the $y'^+y'^-$-directed path then ensures that $y'^+$ is not a sink of $D[Y']$.
    Hence, $x'$ has an out-neighbour $y'^+$ in $Y' \setminus S' \subseteq Y$, meaning it is not a sink of $D[Y]$.

    We have just shown that sinks $S$ of $D[Y]$ are sinks of $D[Y' \setminus S']$.
    Since these vertices are not sinks in $D[Y']$, they must have an out-neighbour in $S'$.
    By induction, all such out-neighbours are the beginning of a directed path of length $\ell$ whose remaining vertices are in $Z'$.
    Therefore, every vertex in $S$ is the beginning of some directed path of length $\ell+1$, whose remaining vertices are all in $Z = Z' \cup S'$, ensuring (Y).
\end{proof}

\begin{figure}
    \centering
    \includegraphics[scale=0.7]{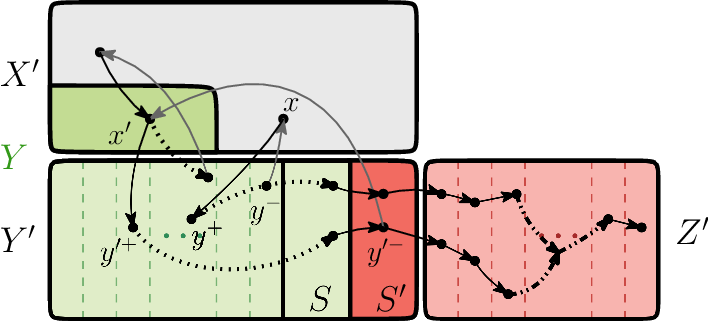}
    \caption{The partition of $V(D)$ into sets $X',Y',Z'$.
    The next step of the induction yields partition $X,Y,Z$, shown in grey, green, and red respectively.
    DAGs $D[Y']$ and $D[Y]$ are layered such that all arcs go from left to right.
    The sinks $S'$ of $D[Y']$ begin a directed path of length $\ell$ continuing in $Z'$, in dash-dotted. Then, the sinks $S$ of $D[Y]$ begin a directed path of length $\ell+1$ continuing in $Z$.
    }
    \label{fig:gluing-directed}
\end{figure}

We are now ready to prove the gluing lemma for directed paths.
\begin{lemma}\label{lem:directed-gluing}
    Let $T'$ be an oriented tree of order $k' \geq 1$, and let $T$ be the oriented tree obtained by appending a directed path of length $\ell$ from any of its endpoints to some vertex of $T'$. If $T'$ is $c'$-universal, then $T$ is $(c' + k' + 2\ell - 3)$-universal. 
\end{lemma}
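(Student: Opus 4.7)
The plan is to apply Lemma~\ref{lem:directed-partition} to decompose $V(D)$ into three structured regions that can accommodate the two parts of $T$: a DAG portion $Y$ designed to host (most of) the copy of $T'$, and a portion $Z$ into which the appended directed path is extended. Assume without loss of generality that the appended directed path of length $\ell$ is oriented away from $v$; the opposite orientation is treated in parallel by invoking the symmetric partition with property $(\overleftarrow{Y})$.

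I apply Lemma~\ref{lem:directed-partition} with parameter $\ell - 1$, producing $(X, Y, Z)$ with $\chi(D[Z]) \leq \ell - 1$, and hence $\chi(D[X \cup Y]) \geq c' + k' + \ell - 2$. The target embedding is the following: find a copy of $T'$ in $D[Y]$ such that the image $v^*$ of $v$ has an out-neighbour $s$ in $Y$ which is a sink of $D[Y]$. Property (Y) then supplies a directed $(\ell-1)$-path starting at $s$ and continuing entirely in $Z$, and prefixing this path with the arc $v^* \to s$ yields the required directed $\ell$-path leaving $v^*$, disjoint from $V(T')$'s image in $Y$ since $Y \cap Z = \emptyset$.

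To realize this embedding, I augment $T'$ to the oriented tree $T^+$ of order $k' + 1$ obtained by appending a new out-leaf $w$ at $v$ via the arc $v \to w$. I apply Lemma~\ref{lem:bikernel-perfect} to the DAG $D[Y]$ with $T^+$ rooted at $w$; provided $\chi(D[Y]) \geq k' + 1$, this produces a partition $(X_1, K_1)$ of $V(D[Y])$ with $\chi(D[K_1]) \leq k'$ such that every $x \in X_1$ is a valid image of $w$, with $v$ sent to an in-neighbour of $x$ in $Y$ and the remaining vertices of $T'$ landing inside $K_1$. Mapping $w$ to a sink of $D[Y]$ then produces exactly the configuration we want.

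The main obstacle is to ensure $X_1$ contains a sink of $D[Y]$. I expect to handle this by carefully picking, at each step of the inductive construction of $(X_1, K_1)$, an in-kernel of the current DAG that avoids all non-isolated sinks (the sinks form an independent set, and any sink with an in-neighbour can be kept out of an in-kernel in favour of one of those in-neighbours), so that such sinks survive into $X_1$. The chromatic accounting closes through a case distinction: if $\chi(D[Y]) < k' + 1$, then $\chi(D[X]) \geq c' + \ell - 2$ from the lower bound on $\chi(D[X \cup Y])$, and I fall back on a dual strategy using $T'$-universality in $D[X]$ together with property (X) to cross into $Y$ and descend into $Z$ via a carefully chosen sink. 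The budget $c' + k' + 2\ell - 3 = c' + k' + (\ell - 1) + (\ell - 2)$ distributes across these two branches.
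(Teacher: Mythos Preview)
Your primary branch---embedding $T^{+}$ in the DAG $D[Y]$ with the extra leaf $w$ landing on a sink---does not go through. There are two separate problems with the claim that one can steer the construction of $(X_1,K_1)$ so that sinks of $D[Y]$ survive into $X_1$. First, the assertion that an in-kernel can always avoid a non-isolated sink is simply false: on the directed path $a\to b\to c$ the unique in-kernel is $\{a,c\}$, which contains the sink $c$. Second, and more decisively, the inductive proof of Lemma~\ref{lem:bikernel-perfect} does not peel off only in-kernels. Whenever the leaf being removed is an \emph{in}-leaf of $T^{+}$, one removes an \emph{out}-kernel, and every out-kernel of a DAG contains all of its sinks (a sink has no out-neighbour, hence cannot be out-dominated and must itself lie in the kernel). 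So as soon as $T'$ has a single in-leaf, one step of the construction wipes out every sink of $D[Y]$, and $X_1$ is sink-free. For a concrete instance take $T'$ to be the arc $u\to v$; then $T^{+}$ is the path $u\to v\to w$ rooted at $w$, its only non-root leaf is the in-leaf $u$, and the very first step removes an out-kernel.

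Your fallback, on the other hand, is exactly the paper's argument, and the fix is to make it the whole proof. The paper never attempts to plant $T'$ inside $D[Y]$; it applies Lemma~\ref{lem:directed-partition} with parameter $\ell-2$ and splits on whether $\chi(D[Y])\ge |T|=k'+\ell$. If so, the DAG $D[Y]$ already contains $T$ by Lemma~\ref{lem:bikernel-perfect}. If not, then $\chi(D[Y])\le k'+\ell-1$ and $\chi(D[Z])\le \ell-2$ force $\chi(D[X])\ge c'$, so $T'$ embeds in $D[X]$; from the image $x$ of $v$ one crosses to $y^{+}\in Y$ via property~(X), walks inside the DAG to a sink, and extends into $Z$ by property~(Y). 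With your parameter $\ell-1$ and your threshold $k'+1$, the range $k'+1\le \chi(D[Y])\le k'+\ell-1$ is covered only by the flawed primary branch; moving the threshold to $k'+\ell$ with parameter $\ell-1$ leaves $\chi(D[X])\ge c'-1$, one short. Switching to parameter $\ell-2$ is what makes the bookkeeping close.
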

\begin{proof}
    Let $T'$ be any oriented tree of order $k'$, and consider a digraph $D$ with chromatic number at least $c'+k'+2\ell-3$.
    We will first show how to append a directed path of length $\ell$ to $T'$ by identifying the source of the path to some vertex of $T'$.
    Let us apply Lemma~\ref{lem:directed-partition} to $D$ with $\ell-2$ in place of $\ell$. and let We let $X,Y,Z$ be the corresponding partition of $V(D)$, satisfying properties (X), (Y) and (Z).
    If $D[Y]$ has chromatic number at least $k'+\ell = |T|$, then by Lemma~\ref{lem:bikernel-perfect}, it contains the tree $T$.

    Otherwise, $D[Y]$ has chromatic number at most $k'+\ell-1$.
    Since $D[Z]$ has chromatic number at most $\ell-2$, we have that $D[X]$ has chromatic number at least $c'+k'+2\ell-3 -(k'+\ell-1) - (\ell-2) = c'$.
    By hypothesis $D[X]$ must contain a copy of $T'$, and we denote by $x$ the vertex on this copy corresponding to the vertex of $T'$ to which we wish to identify the source of the directed path.
    Then, property (X) ensures that $x$ has an out-neighbour $y^+\in Y$.
    If $y^+$ is not a sink, we may follow a directed path from $y^+$ towards a sink $s$ of $D[Y]$, if it is we let $y^+=s$.
    Then, property (Y) yields a path $P$ of length $\ell-2$ starting in $s$ and wholly contained in $Z$.
    Now, $(x,y^+,...,s)$ along with the path $P$ yield a directed path of length at least $\ell$ starting in $x$.
    Considering $T'$ in $X$ along with the path obtained from $x$ yields the desired copy of $T$ in $D$.
    The case is symmetrical when appending a directed path from its sink, using the second kind of partition and property $(\overleftarrow{\text{Y}})$ in Lemma~\ref{lem:directed-partition}. This achieves to show the lemma.
\end{proof}

\subsection{Growing arborescences and \texorpdfstring{$b$}{b}-block paths}

With the gluing lemma for directed paths in hand, the proof for $b$-block paths is now straightforward.
We start with the bound for $2$-block paths given by Theorem~\ref{thm:addario-2blocks}, and glue the remaining $b-2$ paths one by one using Lemma~\ref{lem:directed-gluing}.

\bblock*
\begin{proof}
    We proceed by induction on $b$.
    The case $b=2$ holds by Theorem~\ref{thm:addario-2blocks}.
    We thus assume that $P$ has $b\ge 3$ blocks, and let $P'$ be a subpath of $P$  obtained by deleting one of the end-blocks of $P$, that is, a block containing an end-vertex of $P$.
    We let $\ell$ be the length of this end-block, and note that $P'$ has $k - \ell$ vertices.

    By induction hypothesis, every graph with chromatic number at least $(b-2)(k-\ell-3)+3$ contains a copy of $P'$.
    Now, $P$ is obtained from $P'$ by appending a directed path of length $\ell$.
    We apply Lemma~\ref{lem:directed-gluing}, which yields that $P$ is $\left((b-2)(k-\ell-3) + 3 + (k-\ell) + 2\ell -3 \right)$-universal.
    Then, we have $(b-2)(k-\ell-3 ) + 3 + (k-\ell) + 2\ell -3 \leq (b-1)(k - \ell -3) + 2 \ell + 3$. Since $2 \ell + 3 \leq (b-1) \ell + 3$ as $b \geq 3$, we finally obtain $(b-1)(k-\ell-3) + 2\ell + 3 \leq (b-1)(k-3)+3$.
    This achieves to show that paths of order $k$ with $b$-block are $((b-1)(k-3)+3)$ universal.  
\end{proof}

We now move on to the case of arborescences.
The proof is by induction, and follows a win-win argument that will also be used in the general case.
At a given step, if there are more than $O(\sqrt{k})$ leaves, we use the gluing lemma for leaves, otherwise we leverage our gluing lemma for directed paths.
\begin{theorem}
\label{thm:arbo}
    Every $k$-vertex arborescence is $(\sqrt{\frac{4}{3}} k \sqrt{k} + \frac{k}{2})$-universal.
\end{theorem}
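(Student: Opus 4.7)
The plan is to proceed by induction on $k$ via a win-win strategy depending on the number $p$ of leaves of $T$, with threshold $p \gtrless C\sqrt{k}$ where $C := \sqrt{4/3}$. By symmetry under arc reversal, we may assume $T$ is an out-arborescence (and apply the reverse orientation in the in-arborescence case). Small $k$ and the out-stars $S_k^+$ excluded from Lemma~\ref{lem:gluing-leaves-addario} are handled directly using Burr's $(k-1)^2$-bound and the folklore $(2k-2)$-universality of stars, both of which are dominated by $Ck^{3/2}+k/2$ for $k$ large enough.

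First, if $p \geq C\sqrt{k}$ and $T$ is not a star: since every leaf of the out-arborescence $T$ is an out-leaf in the paper's sense, $T-\mathrm{Out}(T)$ is an out-arborescence on $k-p$ vertices. The induction hypothesis gives its universality $c' \leq C(k-p)^{3/2}+(k-p)/2$, and Lemma~\ref{lem:gluing-leaves-addario} then yields that $T$ is $(c'+2k-4)$-universal. The target inequality reduces to $C[k^{3/2}-(k-p)^{3/2}] \geq 2k - p/2 - 4$. Expanding by Taylor and writing $p = \alpha\sqrt{k}$, one checks that the coefficient of $k$ equals $\tfrac{3C\alpha}{2}-2$ and the coefficient of $\sqrt{k}$ equals $\tfrac{\alpha}{2}-\tfrac{3C\alpha^2}{8}$; with $C\alpha = 4/3$, both vanish simultaneously, leaving a constant-order remainder that is positive for $k$ large enough.

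Second, if $p < C\sqrt{k}$: decompose $T$ into $p$ descending paths $P_1,\ldots,P_p$ using Lemma~\ref{lem:dec-tree-in-paths}, with arc-lengths $\ell_1,\ldots,\ell_p$ summing to $k-1$. Since $T$ is an out-arborescence, each $P_i$ is a \emph{directed} path, and Lemma~\ref{lem:directed-gluing} applies at each gluing step. Starting from $T_1=P_1$, which is $(\ell_1+1)$-universal by the Gallai-Roy-Hasse-Vitaver theorem, and iteratively gluing each $P_{i+1}$ to $T_i$ at its source, we get $c_{i+1} \leq c_i + |T_i| + 2\ell_{i+1} - 3$. Telescoping gives $c_p \leq \sum_{j=1}^{p-1}(p-j)\ell_j - \ell_1 + 2k - 2p + 1$, and bounding every coefficient of $\ell_j$ by $p-2$ yields the clean estimate $c_p \leq p(k-3)+3$. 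For $p < C\sqrt{k}$, this is at most $Ck^{3/2}-3C\sqrt{k}+3 \leq Ck^{3/2}+k/2$.

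The main obstacle is the simultaneous tightness of the two cases at the chosen threshold: Case~A forces $C\alpha \geq 4/3$ (from the leading-order cancellation in the Taylor expansion) while Case~B forces $\alpha \leq C$ (from the $p(k-3)+3$ bound), and together these yield $C^2 \geq 4/3$. The choice $C=\sqrt{4/3}$ is therefore the unique optimum for this strategy, and verifying that the $\sqrt{k}$-coefficient in Case~A also cancels precisely at the tight $C\alpha = 4/3$ is the delicate point of the calculation.
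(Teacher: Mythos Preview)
Your approach is exactly the paper's: the same win-win on $p \gtrless \sqrt{4k/3}$ leaves, with Lemma~\ref{lem:gluing-leaves-addario} in the leafy case and Lemma~\ref{lem:directed-gluing} applied along the path decomposition of Lemma~\ref{lem:dec-tree-in-paths} otherwise. In Case~B the paper proves the cleaner invariant ``$T_i$ is $(i\cdot|T_i|)$-universal'' by induction on $i$ (yielding $pk$), whereas your telescoping to $p(k-3)+3$ is a valid and slightly sharper variant.

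There is, however, one sloppy point. Your sentence about base cases is confused: Burr's $(k-1)^2$ bound is \emph{not} dominated by $Ck^{3/2}+k/2$ for large $k$ (it grows quadratically), and in fact it already exceeds $g(k)$ at $k=5$. So you cannot use Burr to cover ``small $k$'' up to wherever your Taylor argument kicks in. The paper avoids this by taking only $k\in\{1,2\}$ as explicit base cases (plus stars via $2k-2\le g(k)$), and then asserting that the single concrete inequality $g(k-\sqrt{4k/3})+2k-4\le g(k)$ holds for every $k\ge 3$ by direct numerical verification. Your Taylor expansion nicely explains \emph{why} the constant $\sqrt{4/3}$ is the right choice (the $k$- and $\sqrt{k}$-coefficients both vanish exactly at $C\alpha=4/3$), but it is only an asymptotic heuristic; to complete the argument you must replace ``for $k$ large enough'' by an explicit check of the inequality for all $k\ge 3$, as the paper does.
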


\begin{proof}
    Let us show the result for out-arborescences, the case of in-arborescences is symmetrical.
    We set $g(k) = \sqrt{4/3} \cdot k \sqrt{k} + k/2$.
    First, notice that the statement holds trivially for $k=1$ and $k=2$, as we have $g(1)\ge 1$ and $g(2)\ge 2$.
    Then, if $T$ is an out-star $S_k^+$, recall it is $2k-2$-universal, and since $2k-2 \le g(k)$, it is $g(k)$-universal.

    Let us proceed by induction on $k$, the order of $T$.
    By the above, we can assume that $k \geq 3$ and take any out-arborescences of size $k$ distinct from $S_k^+$.
    We distinguish two cases to show that $T$ is $f(k)$-universal, according to the number $p$ of (out-)leaves in $T$.
    We first settle the case $p \geq \sqrt{4k/3}$, then deal with the case $p < \sqrt{4k/3}$.

    Assume first that $T$ contains at least $\sqrt{4k/3}$ (out-)leaves.
    Let $T'$ be the tree obtained from $T$ by removing its leaves, that is $T' = T - Out(T)$, and note $|T'| \leq k-\sqrt{4k/3}$.
    By induction, we know that $T'$ is $g(k-\sqrt{4k/3})$-universal.
    Then, recall $T \neq S_k^+$, so Lemma~\ref{lem:gluing-leaves-addario} yields that $T$ is $g(k-\sqrt{4k/3}) + 2k-4$-universal.
    To show that $T$ is $g(k)$-universal, it suffices to verify $g(k) \geq g(k-\sqrt{4k/3}) + 2k-4$, that is:
    \begin{equation}
    \label{eq:arbo}
          \sqrt{\frac{4}{3}} \left(k-\sqrt{\frac{4k}{3}}\right)^\frac{3}{2} + \frac{k-\sqrt{4k/3}}{2} \le \sqrt{\frac{4}{3}} k \sqrt{k} + \frac{k}{2} \quad \quad \forall k\ge 2
    \end{equation}
    which we numerically verify for all $k \geq 3$.
    
    From now on, we may therefore assume $T$ contains $p< \sqrt{4k/3}$ leaves.
    Using Lemma~\ref{lem:dec-tree-in-paths} on the tree underlying $T$ rooted in an arbitrary vertex, we obtain $p$ descending paths $P_1,\dots,P_p$.
    We consider these paths with their initial orientation, and as $T$ is an out-arborescence, each $P_i$ is a directed path.
    We let $T_i = \bigcup_{j=1}^i P_j$, such that $T=T_p$, and we know that $T_i$ and $P_{i+1}$ only intersect at the beginning of $P_{i+1}$.
    
    Now, we show by induction on $i$ that $T_i$ is $(i \cdot |T_i|)$-universal.
    For $i=1$, as $T_1$ is a directed path, it is $|T_1|$-universal by the Gallai-Roy-Hasse-Vitaver Theorem. 
    Assume the induction hypothesis holds for some $i<p$.
    Consider $T_{i+1}$, which is obtained from $T_i$ by appending the path $P_{i+1}$ rooted at their intersecting vertex.
    Then, Lemma~\ref{lem:directed-gluing} yields that $T_{i+1}$ is $(i \cdot |T_i|+|T_i|+2l_{i+1}-3)$-universal, where $l_{i+1}$ is the length of $P_{i+1}$.
    As $|T_{i+1}|=|T_i|+l_{i+1}$, we obtain that $i \cdot |T_i|+|T_i|+2l_{i+1}-3\le (i-1) \cdot |T_i|+2(|T_i|+l_{i+1}) \le (i-1) \cdot |T_{i+1}|+2(|T_{i+1}|)=(i+1) \cdot |T_{i+1}|$.
    Thus we conclude that $T_{i+1}$ is $(i+1) \cdot |T_{i+1}|$-universal, proving step $i+1$ of the induction.
    Finally we obtain that $T = T_p$ is $p k$-universal, and $p k <\sqrt{4k/3} \cdot k$.
    Since $g(k) > \sqrt{4/3} \cdot k \sqrt{k}$, this achieves to show that $T$ is $g(k)$-universal, concluding the proof.
\end{proof}

\section{Oriented trees}\label{sec:oriented}

In this section, we obtain universality bounds for general oriented trees.
As in the case of arborescences, we consider an oriented tree $T'$, for which we have a bound $c'$. Then, we derive a universality bound for the tree $T$ obtained by appending an oriented path $Q$ to $T'$.
We adapt the techniques of the previous section from directed paths to general oriented paths.

\subsection{Gluing an oriented path}

Given an oriented path $Q$, we obtain a {\it rooted oriented path} from $Q$ by choosing one of the extremities of $Q$ as the {\it root} of $Q$.
Now, given a rooted oriented path $Q$, with root $r$, a digraph $D$ and a vertex $x$ of $D$, we say that $D$ contains a copy of $Q$ {\it starting at $x$} if $D$ contains a copy of $Q$ where $r$ is identified to $x$. 

The next two lemmas are the respective counterparts of Lemma~\ref{lem:directed-partition} and Lemma~\ref{lem:directed-gluing}. In both of those, the obtained bounds now depend quadratically on the length of the path being appended, where this dependency was linear in the last section.

\begin{lemma}\label{lem:oriented-partition}
Let $\ell \geq 0$ be an integer, and $Q$ a rooted oriented path of length $\ell$.
Then, for every digraph $D$, there exists a partition of $V(D)$ into sets $X,Y,Z$ such that:
\begin{itemize}
    \item[(X)] Every vertex $x \in X$ admits both an in-neighbour $y^-$ and an out-neighbour $y^+$ in $Y$,
    \item[(Y)] $Y$ induces a directed acyclic graph, and for every vertex $y \in Y$, there exists a copy of $Q$ starting at $y$ and contained in $D[Y \cup Z]$,
    \item[(Z)] $\chi(D[Z])\le \frac{\ell(\ell+1)}{2}$.
\end{itemize}
\end{lemma}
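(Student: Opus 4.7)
The plan is to prove the lemma by induction on the length $\ell$ of $Q$, adapting the template of Lemma~\ref{lem:directed-partition} with an extra ingredient drawn from Lemma~\ref{lem:bikernel-perfect} to handle the freedom of orientations in $Q$. As in the directed case, we may assume throughout that $D$ is oriented (by replacing each digon with an arbitrary arc), since any valid partition of the resulting digraph remains valid for $D$. The base case $\ell = 0$ mirrors the directed setting: take $Y$ a maximal induced DAG in $D$, $X = V(D) \setminus Y$, and $Z = \emptyset$. Property (Z) holds trivially, (Y) is vacuous since a path of length $0$ is a single vertex, and (X) follows from the maximality of $Y$ together with the absence of digons (any cycle through $x$ in $D[Y \cup \{x\}]$ has length at least $3$, providing distinct in- and out-neighbours in $Y$).

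For the inductive step, consider $Q$ of length $\ell+1$ rooted at $v_0$, and let $Q'$ be the rooted oriented path of length $\ell$ obtained by removing $v_0$ and its incident arc; $Q'$ is rooted at $v_1$. Assume without loss of generality that the removed arc is $v_0 \to v_1$ (the opposite orientation is symmetric, using in-neighbours in place of out-neighbours). Applying the induction hypothesis to $Q'$ yields a partition $(X', Y', Z')$ of $V(D)$ with $\chi(D[Z']) \le \ell(\ell+1)/2$, where $D[Y']$ is a DAG, every $y \in Y'$ roots a copy of $Q'$ in $D[Y' \cup Z']$, and every $x \in X'$ has an out-neighbour in $Y'$. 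The main obstacle is that $D[Y']$, while acyclic, may have arbitrarily large chromatic number, so we cannot simply absorb $Y'$ into the new $Z$. The crucial step is to apply Lemma~\ref{lem:bikernel-perfect} to the DAG $D[Y']$ with $T = Q$ rooted at $v_0$, producing a partition $(\hat X, \hat K)$ of $Y'$ with $\chi(D[\hat K]) \le |Q| - 1 = \ell + 1$ and every vertex of $\hat X$ being the root of a copy of $Q$ inside $D[Y']$ (if $\chi(D[Y']) < |Q|$, we take $\hat X = \emptyset$ and $\hat K = Y'$).

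We then set $Z = Z' \cup \hat K$, giving $\chi(D[Z]) \le \ell(\ell+1)/2 + (\ell+1) = (\ell+1)(\ell+2)/2$, take $Y$ to be a maximal subset of $V(D) \setminus Z = X' \cup \hat X$ that contains $\hat X$ and induces a DAG (well-defined since $\hat X \subseteq Y'$ already induces a DAG), and let $X = V(D) \setminus (Y \cup Z) \subseteq X'$. Property (X) follows from the maximality of $Y$ exactly as in the base case, and (Z) is the bound just computed. For (Y), each $y \in \hat X \subseteq Y$ roots a copy of $Q$ contained in $D[Y'] \subseteq D[Y \cup Z]$ directly from Lemma~\ref{lem:bikernel-perfect}, while each $y \in Y \setminus \hat X \subseteq X'$ has, by property (X) of the inductive partition, an out-neighbour $z \in Y' \subseteq Y \cup Z$; the copy of $Q'$ rooted at $z$ produced by induction lies in $D[Y' \cup Z'] \subseteq D[Y \cup Z]$ and avoids $y$ (since $y \in X'$ is disjoint from $Y' \cup Z'$), so prepending the arc $y \to z$ yields the desired copy of $Q$ rooted at $y$.
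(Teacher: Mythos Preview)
Your proof is correct and follows essentially the same approach as the paper: induction on $\ell$, using Lemma~\ref{lem:bikernel-perfect} on the DAG $D[Y']$ to split it into a low-chromatic part $\hat K$ (the paper's $K'$) absorbed into $Z$ and a part $\hat X$ (the paper's $Y'\setminus K'$) whose vertices already root copies of $Q$, then enlarging $\hat X$ to a maximal DAG $Y$ and handling the added vertices via property~(X) of the inductive partition together with the inductive copies of $Q'$. The only cosmetic differences are that you fix the orientation of the first arc up front (the paper keeps both cases throughout) and that you pre-emptively pass to an oriented subdigraph, which is harmless but not actually needed for this lemma.
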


\begin{proof}
    We prove the result by induction on $\ell$.
    For $\ell = 0$, let $Z = \emptyset$, let $Y$ be the vertex set of a maximal acyclic subdigraph of $D$, and $X = V(D)\setminus Y$.
    Properties (Y) and (Z) hold trivially, while property (X) follows from the maximality of $D[Y]$ as an acyclic subdigraph of $D$.

    Assume by induction that the lemma holds for any rooted oriented path of length $\ell \geq 0$.
    To show it holds for paths of length $\ell+1$, let us consider any oriented path $Q$ of length $\ell+1$ rooted in $r$.
    Let $r'$ be the unique neighbour of $r$ in $Q$ and let $Q'$ be the oriented path $Q - r$ rooted in $r'$.
    Consider the partition $X',Y',Z' \subseteq V(D)$ given by induction to satisfy the properties of the lemma for $Q'$.
    We will build partition $X,Y,Z$ satisfying the properties for $Q$, and refer the reader to Figure~\ref{fig:gluing-oriented} for a sketch.
    Informally, in trying to satisfy (Y), we transfer the vertices of a suitably chosen set $K'\subseteq Y'$, from $Y'$ to $Z'$, defining $Z$.
    Then, to satisfy (X), we transfer vertices from $X'$ to the remaining vertices of $Y'$, defining $Y$, after which $X$ consists of the vertices left in $X'$.
    We show how to build copies of $Q$ starting in $Y$, either thanks to $K'$, or by appending a neighbour to some copy of $Q'$ obtained by induction.

    Let us start by defining the set $K' \subseteq Y'$, such that $Z = Z' \cup K'$.
    Let us start by defining $Z$, which is built from $Z'$ by adding a set of vertices $K'$, which we now define.
    If $\chi(D[Y']) \leq \ell +1 $, we simply define $K' = Y'$.
    Otherwise, $D[Y']$ is an acyclic digraph such that $\chi(D[Y']) \geq \ell+2$.
    We may now apply Lemma~\ref{lem:bikernel-perfect} to $D[Y']$, with $Q$ as the oriented tree $T$ rooted in $r$ (note that $|Q|=\ell+2$).
    This gives us a subset $K' \subseteq Y'$ (shown in darker red), with $\chi(D[K']) \leq \ell+1$, and such that every $y \in Y' \setminus K$ is the beginning of some $Q$ whose remaining vertices belong to $K'$.
    We let $Z = Z' \cup K'$.
    We have shown that $\chi(D[K']) \leq \ell+1$ in both cases above, and by induction we have $\chi(D[Z']) \leq \frac{\ell(\ell+1)}{2}$.
    Colouring $D[K']$ and $D[Z']$ with a different set of colours gives $\chi(D[Z]) \leq \frac{\ell(\ell+1)}{2} + (\ell + 1) = \frac{(\ell+1)(\ell+2)}{2}$, satisfying (Z).

    We now turn to defining $Y$ and $X$.
    Consider the (possibly empty) subdigraph $D[Y' \setminus K']$, depicted in lighter green, which is acyclic since $D[Y']$ is.
    We define $Y$ as a vertex set containing $Y' \setminus K'$, and which induces a maximal directed acyclic graph in $D[V \setminus Z]$. Then, we let $X = X' \setminus Y$.
    The set $Y$ can be obtained by starting with $Y=Y'$ and iteratively adding vertices of $X'$ (shown in darker green), while maintaining that $D[Y]$ is acyclic.
    After having considered all the vertices of $X'$, we are guaranteed that for every $x \in X$, there is a cycle going through $x$ in $D[Y\cup\{x\}]$. The neighbours $y^-,y^+$ of $x$ on the cycle belong to $Y$, which yields property (X).
    
    What is left to show is that property (Y) holds.
    Recall $Y = (Y' \setminus K') \cup (Y \cap X')$, to build $Q$ starting from any vertex of $Y$, we use two strategies by considering either $y \in (Y' \setminus K')$ or $x' \in (Y \cap X')$.
    In the first case, $Y' \setminus K' \neq \emptyset$, and we already argued that $\chi(D[Y'])\ge \ell +2$.
    Then, recall $y$ must be the beginning of a subpath $Q$ whose remaining vertices lie in $K'$. Since $y \in Y$ and $K' \subseteq Z$, $Q$ is contained in $D[Y \cup Z]$, ensuring (Y) here.
    In the second case, we have $x' \in X'$ in particular, so $x'$ admits both an in-neighbour $y'^-$ and an out-neighbour $y'^+$ in $Y'$ by induction.
    Now, $Q$ may be obtained from $Q'$ by appending either an out-neighbour, or respectively an in-neighbour, to its root $r'$.
    By induction, we may then consider a copy of $Q'$ starting at either $y'^-$, or $y'^+$ respectively.
    See two examples for $y'^-$ in Figure~\ref{fig:gluing-oriented}. This copy is contained in $D[Y' \cup Z']$, and in particular it cannot contain $x'$ since $(Y \cap X') \cap ((Y' \setminus K') \cup Z') = \emptyset$.
    Therefore, $(x',y'^-)$, or $(x',y'^+)$ respectively, along with the copy of $Q'$ yields a copy of $Q$ starting in $x'$, achieving to show (Y).
\end{proof}

We now move on to show the gluing lemma for oriented paths.
\begin{figure}
    \centering
    \includegraphics[scale=0.7]{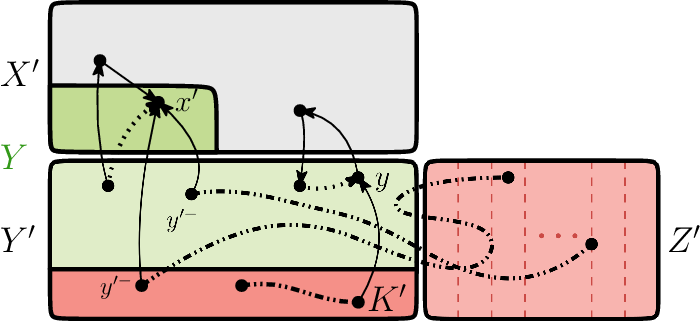}
    \caption{The partition of $V(D)$ into sets $X',Y',Z'$.
    The next step of the induction yields partition $X,Y,Z$, shown in grey, green, and red respectively.
    Vertices of $Y'$ begin a copy of $Q'$, in dash-dotted. Then, vertices of $Y$ begin a copy of $Q$, either in $K'$ (see $y$), or by appending an arc to some $Q'$ (see $x'$).}
    \label{fig:gluing-oriented}
\end{figure}

\begin{lemma}\label{lem:oriented-gluing}
    Let $T'$ be an oriented tree of order $k' \geq 1$, and let $T$ be the oriented tree obtained by appending a rooted oriented path of length $\ell$ to any vertex of $T'$. If $T'$ is $c'$-universal, then $T$ is $(c' + k' + \frac{\ell(\ell+1)}{2} - 1)$-universal.
\end{lemma}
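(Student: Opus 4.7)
The plan is to follow the template of Lemma~\ref{lem:directed-gluing}, replacing Lemma~\ref{lem:directed-partition} with its oriented counterpart Lemma~\ref{lem:oriented-partition}. We may assume $\ell \ge 1$, since for $\ell = 0$ we have $T = T'$ and the claimed bound is trivially implied by $c'$-universality. Denote by $Q$ the appended rooted oriented path of length $\ell$, with root $r$, let $r'$ be the unique neighbor of $r$ in $Q$, and let $Q' = Q - r$, which is a rooted oriented path of length $\ell - 1$ with root $r'$.

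Let $D$ be any digraph with $\chi(D) \ge c' + k' + \ell(\ell+1)/2 - 1$. I would apply Lemma~\ref{lem:oriented-partition} to $D$ with $Q'$ in place of $Q$, obtaining a partition $(X, Y, Z)$ of $V(D)$ satisfying the three properties for $Q'$. If $\chi(D[Y]) \ge k' + \ell = |T|$, then since $D[Y]$ is a DAG, Lemma~\ref{lem:bikernel-perfect} already produces a copy of $T$ inside $D[Y]$ and we are done. Otherwise $\chi(D[Y]) \le k' + \ell - 1$; combined with property (Z) giving $\chi(D[Z]) \le \ell(\ell-1)/2$, subadditivity of the chromatic number yields
\[
\chi(D[X]) \;\ge\; c' + k' + \frac{\ell(\ell+1)}{2} - 1 - (k'+\ell-1) - \frac{\ell(\ell-1)}{2} \;=\; c'.
\]

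By hypothesis, $D[X]$ then contains a copy of $T'$; let $x$ be the image in this copy of the vertex of $T'$ to which $Q$ is to be appended. Property (X) supplies both an in-neighbor $y^-$ and an out-neighbor $y^+$ of $x$ inside $Y$. Depending on whether the arc of $Q$ between $r$ and $r'$ is oriented from $r$ to $r'$ or from $r'$ to $r$, I set $y := y^+$ or $y := y^-$, so that the arc $xy$ of $D$ realizes the first arc of $Q$ in the correct orientation. Property (Y), applied to $y \in Y$, then provides a copy of $Q'$ starting at $y$ and entirely contained in $D[Y \cup Z]$. Since the copy of $T'$ lies in $X$ while the copy of $Q'$ lies in $Y \cup Z$, the two are vertex-disjoint, and together with the arc $xy$ they assemble into the desired copy of $T$.

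The main step requiring care is the chromatic arithmetic above, and checking that the two pieces glue together correctly at $x$ and $y$; the vertex-disjointness comes for free from the partition. Unlike Lemma~\ref{lem:directed-gluing}, no separate reversed-arc variant of the partition lemma is required: the orientation of the first arc of $Q$ at $r$ is handled symmetrically by choosing between $y^-$ and $y^+$ in property (X), so a single application of Lemma~\ref{lem:oriented-partition} suffices. The quadratic dependence $\ell(\ell+1)/2$ in the statement, in contrast to the linear $2\ell-3$ of the directed case, is inherited entirely from the corresponding bound on $\chi(D[Z])$ in Lemma~\ref{lem:oriented-partition}.
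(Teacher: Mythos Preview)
Your proposal is correct and follows essentially the same approach as the paper: apply Lemma~\ref{lem:oriented-partition} to $D$ with $Q'=Q-r$, use Lemma~\ref{lem:bikernel-perfect} if $\chi(D[Y])\ge |T|$, and otherwise find $T'$ in $D[X]$ and extend from $x$ via $y^-$ or $y^+$ using property~(Y). Your explicit handling of $\ell=0$ and your remarks on why no reversed variant of the partition lemma is needed are extra commentary but do not change the argument.
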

\begin{proof}
    Let us consider a digraph $D$ with chromatic number at least $c'+k'+\ell(\ell+1)/2-1$,  and $Q$ the rooted oriented path of length $\ell$ we want to append to $T$. We denote by $r$ the root of $Q$ and by $r'$ its unique neighbour in $Q$.
    Let $Q'$ be the oriented path of length $\ell-1$ obtained by deleting $r$ from $Q$ and rooted in $r'$.
    We start by applying Lemma~\ref{lem:oriented-partition} to $D$ with the rooted path $Q'$. We let $X,Y,Z$ be the corresponding partition of $V(D)$, satisfying properties (X), (Y) and (Z).
    If $D[Y]$ has chromatic number at least $k'+\ell = |T|$, since it is a directed acyclic graph, it contains the tree $T$ by Lemma~\ref{lem:bikernel-perfect}.

    Otherwise, $\chi(D[Y]) \leq k'+\ell-1$, and we also know $\chi(D[Z]) \leq (\ell-1)\ell/2$ by property (Z). Therefore, $D[X]$ has chromatic number at least $(c' + k' + \ell(\ell+1)/2 - 1) - (k'+\ell-1) - (\ell-1)\ell/2 = c'$.
    By hypothesis, $D[X]$ must contain a copy of $T'$, which we identify with $T'$ implicitly, and we denote $x$ the vertex on this copy where $Q$ has to start to form $T$. 
    Then, property (X) ensures us that $x$ has both an in-neighbour $y^-$ and an out-neighbour $y^+$ in $Y$.
    Both $y^-$ and $y^+$ are the beginning of a rooted copy of $Q'$ contained in $Y \cup Z$. Therefore, according to the orientation of the arc between $r$ and $r'$ in $Q$, we obtain a rooted copy of $Q$ starting in $x$ with either $(y^-,x)$ or $(y,x^+)$, with all remaining vertices in $Y \cup Z$. Except from $x$, this copy of $Q$ is disjoint from the copy of $T'$ obtained in $x$, yielding a copy of $T$ in $D$.
\end{proof}

\subsection{Growing oriented trees}

We are now ready to prove our main result, bounding the universality of oriented trees. As for the case of arborescences, we proceed by induction, and use the same win-win argument. If there are more than $O(\sqrt{k})$ leaves, we glue leaves through Corollary~\ref{cor:gluing-leaves-addario-twice}, otherwise we glue oriented paths through Lemma~\ref{lem:oriented-gluing}.

\main*
\begin{proof}
    We set $f(k)=8 \sqrt{2/15} \cdot k\sqrt{k} + 11k/3 + \sqrt{5/6} \cdot \sqrt{k} + 1$.
    First, notice that the statement holds trivially for $k=1$ and $k=2$, as we have $f(1)\ge 1$ and $f(2)\ge 2$.
    On the other hand, if $T$ is an out-star $S_k^+$ or an in-star $S_k^-$, recall it is $(2k-2)$-universal, and since $2k-2 \le f(k)$, it is $f(k)$-universal.

    Let us now proceed by induction on $k$, the order of $T$. 
    By the above, we can assume $k \geq 3$, and take any oriented tree $T$ of size $k$ distinct from $S_k^+$ and $S_k^-$.
    We show $T$ is $f(k)$-universal by using two strategies according to the number $p$ of leaves in $T$.
    We first settle the case $p \geq \sqrt{5k/6}$, then deal with the case $p < \sqrt{5k/6}$.

    Assume first that $T$ contains at least $\sqrt{5k/6}$ leaves.
    Consider the subtree $T'$ of $T$ obtained by removing all of its leaves, and note $|T'| \leq k - \sqrt{5k/6}$.
    By induction, we know that $T'$ is $f(k - \sqrt{5k/6})$-universal, then Corollary~\ref{cor:gluing-leaves-addario-twice} yields that $T$ is $(f(k - 5k/6) + 4k-9)$-universal.
    To show that $T$ is $f(k)$-universal, it suffices to verify $f(k) \geq f(k - \sqrt{5k/6}) + 4k-9$, that is:
    \begin{align*}
        &\frac{8 \sqrt{30}}{15} k^{3/2} + \frac{11}{3}k + \sqrt{\frac{5}{6}} \sqrt{k} + 1 \\
        \geq
        &\frac{8 \sqrt{30}}{15} (k - \sqrt{\frac{5}{6} k})^{3/2} + \frac{11}{3}(k - \sqrt{\frac{5}{6} k}) + \sqrt{\frac{5}{6}} \sqrt{k - \sqrt{\frac{5}{6} k}} + 4k - 8
    \end{align*}
    which we numerically verify for all $k \geq 3$.

    From now on, we may therefore assume that $T$ contains $p < \sqrt{5k/6}$ leaves.
    Our goal is to split $T$ into oriented paths, which we use to build $T$ by using Lemma~\ref{lem:oriented-gluing}.
    Let us first apply Lemma~\ref{lem:dec-tree-in-paths} on the tree underlying $T$, rooted in an arbitrary vertex $r$. 
    This yields descending paths $Q_1,\dots ,Q_p$, which we root in their endpoint closest to $r$.
    We consider these rooted paths with their initial orientation, such that $T = T_p = \bigcup_{j=1}^p Q_j$.
    Note then that $T$ can be obtained recursively by starting from $T_1 = Q_1$, and building $T_{i+1}$ by identifying the root of $Q_{i+1}$ to some vertex of $T_i$.
    
    At this point, the number of paths is bounded by $p$, but in order to apply Lemma~\ref{lem:oriented-gluing} successfully, we also need to control their length.
    To do so, we cut paths of length exceeding $\ell = \lceil \sqrt{ 6k/5} \rceil \leq  \sqrt{ 6k/5} + 1$ into smaller ones as follows.
    For $j \in [1,p]$, we split $Q_j$ into a minimal number of consecutive subpaths $(Q_{j,h})_h$, such that all have length exactly $\ell$ except for possibly the last one, which may have length strictly less than $\ell$.
    We denote $(P_i)_{i \in [1,m]}$ the $m$ resulting paths, and order them according to $(Q_j)_j$ they stem from, then with respect to their distance to the root of $Q_j$.  
    Observe that $T$ can still be constructed recursively using $(P_i)_i$, starting with $T_1=P_1$, and constructing $T_{i+1}$ by appending $P_{i+1}$ to $T_i$.
    Then, we have $T=T_m= \bigcup_{i=1}^m P_i$.
    
    To bound the number $m$ of newly created paths $(P_i)_i$, we count those of length less than $\ell$ separately from those with length exactly $\ell$.
    By definition, at most one $P_i$ of length strictly less than $\ell$ is created per $Q_j$, so there are at most $p < \sqrt{5k/6}$ of them.
    Let us now bound the number of paths of length exactly $\ell = \lceil \sqrt{6k/5} \rceil$. Note $T$ consists of exactly $k-1$ arcs, and since the paths $(P_i)_i$ are arc-disjoint, there must be at most $\lfloor (k-1)/\lceil  \sqrt{6k/5} \rceil \rfloor \leq \sqrt{5k/6}$ of them.
    Finally, $(P_i)_i$ consists of $m \leq 2 \sqrt{5k/6}$ paths, each of length at most $\ell \leq \sqrt{6k/5} + 1$.
    
    Recall that $T=T_m$.
    We are now ready to show that $T$ is $f(k)$-universal, which we do by recursively computing bounds $(c_i)_i$ such that $T_i$ is $c_i$-universal.
    Assume $T_i$, of order $k_i$, is $c_i$-universal.
    Recall $T_{i+1}$ is obtained from $T_i$ by appending $P_{i+1}$, which has length at most $\ell$.
    We apply Lemma~\ref{lem:oriented-gluing}, which yields that $T_{i+1}$ is $c_{i+1}$-universal for $c_{i+1} = c_i + k_i + \ell (\ell + 1)/2 - 1$.
    Now, since $T_i$ is obtained from $P_1$, of order at most $\ell+1$, by appending $(i-1)$ paths of length at most $\ell$, we have $k_i \leq i \ell + 1$.
    This yields:
    \begin{align*}
    c_{i+1} &\leq c_i + i \ell + 1 + \frac{\ell(\ell+1)}{2} - 1 = c_i + i \ell +\frac{\ell(\ell+1)}{2}\\
    c_{i+1} &\leq c_1 + \ell (\sum_{j=1}^i j) + \frac{i \ell (\ell+1)}{2} \textrm{~~~ by an immediate induction} \\
    c_{i+1} &\leq c_1 + \frac{1}{2} \ell i (\ell +i+2)
    \end{align*}
    Now, since $|T_1| \le \ell + 1$ we have $c_1\le \ell (\ell+1)/2+1$ by Theorem~\ref{thm:addario-general}. We set $i=m-1$, and with the inequalities $\ell\le \sqrt{ 6k/5} + 1$ and $m\leq 2 \sqrt{5k/6}$ we obtain:
    \begin{align*}
    c_{m} &\leq 
    \frac{\ell(\ell+1)}{2} + 1 + \frac{1}{2} \ell (m-1)(\ell+m +1)\\
    c_m &\leq \frac{\ell m}{2} (\ell + m) + 1 \\
    c_m &\leq \frac{1}{2} (\sqrt{ \frac{6}{5} k} + 1) (2 \sqrt{\frac{5}{6} k}) \Big( \sqrt{ \frac{6}{5} k} + 1 + 2 \sqrt{\frac{5}{6} k} \Big) + 1 \\
    c_m &\leq 8 \sqrt{\frac{2}{15}} k^{3/2} + \frac{11}{3}k + \sqrt{\frac{5}{6}} \sqrt{k} + 1
    \end{align*}
    This is exactly saying $c_m\le f(k)$, which achieves to prove that $T$ is $f(k)$-universal and concludes the proof.
\end{proof}

\bibliographystyle{plainurl}
\bibliography{main.bib}

\end{document}